\pgfplotsset{compat = newest}
\tikzset{anchorbase/.style={baseline={([yshift=-0.5ex]current bounding box.center)}}}
\tikzset{
    partial ellipse/.style args={#1:#2:#3}{
        insert path={+ (#1:#3) arc (#1:#2:#3)}
    }
}
\tikzstyle directed=[postaction={decorate,decoration={markings,
    mark=at position #1 with {\arrow{>}}}}]
\tikzstyle rdirected=[postaction={decorate,decoration={markings,
    mark=at position #1 with {\arrow{<}}}}]
\definecolor{UCDblue}{cmyk}{1,0.56,0,0.34}
\definecolor{UCDgold}{cmyk}{0,0.19,1,0.15}
\DeclareMathOperator{\JM}{JM}
\newcommand{\C}{\mathbb{C}}
\newcommand{\MVcomment}[1]{} % to comment out since comment gave trouble
\newtheorem{thm}{Theorem}[section]
\newtheorem{cor}[thm]{Corollary}
\newtheorem{lemma}[thm]{Lemma}
\newtheorem{conj}[thm]{Conjecture}
\theoremstyle{definition}
\newtheorem{defn}[thm]{Definition}
\theoremstyle{remark}
\newtheorem{rem}[thm]{Remark}
\title{Triply Graded Link Homology for Coxeter Braids on 4 Strands}
\author{Joshua P. Turner}
\address{Department of Mathematics
\\
University of California, Davis\\
Davis, CA 95616, USA}
\email{jpturner@ucdavis.edu}
\newcommand{\HHH}{\mathrm{HHH}}
\newcommand{\HY}{\mathrm{HY}}
\newcommand{\FT}{\mathrm{FT}}
\begin{document}

\begin{abstract}
    We compute the triply graded Khovanov-Rozansky homology for Coxeter braids on 4 strands.
\end{abstract}

\maketitle

\tableofcontents

\section*{Acknowledgments}

This work was partially supported by the NSF grant DMS-2302305. The author would like to thank Eugene Gorsky and Matt Hogancamp for useful discussions.

\section{Introduction}

In Section 6 of \cite{T}, the author uses a recursive process developed by Hogancamp and Elias in \cite{EH} to compute the triply-graded Khovanov Rozansky homology (also called HHH) for Coxeter braids on 3 strands, matching the result with the bigraded Hilbert series of a particular ideal in a polynomial ring. These braids were already known to be parity by work of Hogancamp in \cite{Hog1}.

In this paper, we use the same method to show that Coexter braids up to 4 strands are parity, and to compute $\HHH$. Previously, this method has been used by Hogancamp to compute $\HHH$ for $T(n,dn)$ torus links in \cite{Hog1}, and by Hogancamp and Mellit to compute $\HHH$ for all torus links in \cite{HogMel}.

\subsection{Homology of Coxeter links} 
The three gradings of $\HHH(\beta)$ are typically denoted $Q,T,$ and $A$. However, in line with \cite{EH}, we use the change of variables
$$q = Q^2, \quad t = T^2Q^{-2}, \quad a=AQ^{-2}.$$

\begin{defn}
    For any braid or braid-like diagram $\beta$, we say that $\HHH(\beta)$ is parity if it is supported in only even homological ($T$) degrees. We will also say $\beta$ itself is parity if $\HHH(\beta)$ is parity.
\end{defn}

Given integers $0 \leq d_1 \leq d_2 \leq \cdots \leq d_n$, we define the n-stranded braid

$$\beta(d_1, \dots, d_n) = \FT_n^{d_1} \FT_{n-1}^{d_2-d_1} \cdots \FT_2^{d_{n-1}-d_{n-2}}$$
$$= \JM_n^{d_1} \JM_{n-1}^{d_2} \cdots \JM_2^{d_{n-1}},$$
where $\FT_k$ and $\JM_k$ represent a full twist and a Jucys-Murphy element on the first $k$ strands respectively. These braids are part of the family of Coxeter braids defined in \cite{OR}. For the sake of comparison to the ideal $J$ defined in Section \ref{sec: intro ideal J}, we focus on pure Coxeter braids, whose closures have the maximum number of components. We expect that our calculations of $\HHH$ can be easily extended to all Coxeter braids as defined in \cite{OR}.

\begin{conj}
\label{conj: parity all n}
    The Coxeter braid $\beta(d_1, \dots, d_n)$ is parity for all $n$ and for all $0 \leq d_1 \leq \cdots \leq d_n$.
\end{conj}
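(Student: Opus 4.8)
The natural approach is induction on the number of strands $n$, with the cases $n \le 4$ established in this paper serving as the base. The key structural input is that pure Coxeter braids are recursively nested: from $\FT_n = \JM_n \FT_{n-1}$ one gets
\[
\beta(d_1, \dots, d_n) = \FT_n^{d_1}\cdot \iota\big(\beta(d_2 - d_1, d_3 - d_1, \dots, d_n - d_1)\big),
\]
where $\iota$ includes a braid on $n-1$ strands into the first $n-1$ of $n$ strands (leaving strand $n$ free) and $\beta(d_2-d_1,\dots,d_n-d_1)$ is again a pure Coxeter braid. So the inductive step amounts to showing: if $\beta'$ is a parity braid on $n-1$ strands, then $\HHH\big(\FT_n^{d}\cdot\iota(\beta')\big)$ is parity for every $d \ge 0$. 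The tool is categorical diagonalization of the full twist (Elias--Hogancamp \cite{EH}): it resolves $\FT_n$ into a one-sided convolution of eigenprojections with explicit grading shifts, and one feeds $\iota(\beta')$ --- controlled by the inductive hypothesis --- through this resolution and then through Hochschild homology, obtaining an explicit complex computing $\HHH(\beta(d_1,\dots,d_n))$, exactly in the spirit of Section 6 of \cite{T} and its extension to $4$ strands here.

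In detail: first I would assemble, for each $(d_1,\dots,d_n)$, the explicit complex $C = C(d_1,\dots,d_n)$ whose terms are built from $\HHH$ of Coxeter braids on fewer strands tensored with bottom-row Rouquier complexes, with the homological and internal shifts dictated by the combinatorics of $(d_1,\dots,d_n)$ and of the eigenprojections. Second, I would prove each term of $C$ is parity: the inductive hypothesis handles the lower Coxeter-braid factors, and one checks that the contribution of the added strand sits in even homological degree, which is a statement about the homological degrees in which the relevant morphism spaces between one-column Soergel bimodules are supported. Third, I would show that the differential of $C$ vanishes up to homotopy, so that $\HHH(\beta(d_1,\dots,d_n))$ is the direct sum of the terms of $C$ and parity is inherited; as in Hogancamp's computation of $\HHH$ for $T(n,dn)$ torus links \cite{Hog1} and in the present paper's $n\le 4$ cases, this should follow from step two by a degree count, since the connecting maps in the iterated mapping cone defining $C$ are forced into morphism spaces that vanish once all terms are known to be parity.

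The hard part is exactly this last step in general: the off-parity morphism spaces between the eigenprojections (and their convolutions with $\iota(\beta')$) that must vanish proliferate as $n$ grows, and no uniform mechanism for that vanishing is presently known --- for $n\le 4$ it is a bounded check, but the combinatorics has no evident stable pattern. I expect the most promising route around this obstacle is to bypass the term-by-term bookkeeping altogether: prove directly that $\HHH(\beta(d_1,\dots,d_n))$ coincides with the bigraded Hilbert series of the ideal $J$ of Section \ref{sec: intro ideal J}, tensored with an exterior algebra in the variable $a$, so that parity becomes manifest. Concretely this would require a recursion for the Hilbert series of $J$ mirroring the Elias--Hogancamp recursion for $\HHH$, together with a vanishing (or maximal-surjectivity) statement forcing the two recursions to agree; establishing that statement uniformly in $n$ is, to my mind, the heart of the conjecture.
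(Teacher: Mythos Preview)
The statement you are attempting is labeled a \emph{conjecture} in the paper and is not proved there; the paper establishes only the cases $n\le 4$ (Theorem~\ref{thm: beta parity}), so there is no proof in the paper to compare against. What you have written is, as you yourself acknowledge, a strategy rather than a proof, and the gap you flag in your third step --- that the connecting maps in the iterated cone must vanish for degree reasons once the terms are parity --- is exactly the missing ingredient. For $n\le 4$ this is handled not by a single degree argument but by an ad hoc chain of auxiliary lemmas, and there is no known uniform mechanism for general $n$; you are right that this is the heart of the problem.

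There is a second, more structural gap in your inductive scheme. You assume the terms of your complex $C$ are built from $\HHH$ of Coxeter braids on fewer strands, so that the inductive hypothesis applies directly. The paper's $n=4$ computation shows this is too optimistic: the recursion forces one to prove parity for a zoo of auxiliary braids that are \emph{not} Coxeter --- objects like $\sigma_3^{2k}C(n)$, $\sigma_2^2\sigma_3^{2k}B(n,0)$, and $\widetilde{\JM}_3\sigma_3^{2k}C(n)$ in Lemmas~\ref{lemma: twisted C(n)}, \ref{lemma: twisted B(n,0)}, \ref{lemma: JM C(n)}. Any induction on $n$ would need a hypothesis broad enough to cover whatever twisted variants arise at the next stage, and identifying that class in closed form is itself open. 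Your alternative route through the ideal $J$ is essentially the program the paper gestures at in the paragraph preceding Section~2; it trades the parity problem for an equally hard freeness and recursion-matching problem for $J(d_1,\dots,d_n)$, which is likewise open for $n\ge 5$.
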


In \cite{T}, the author shows that Coxeter braids on 3 strands are parity, and explicitly computes $\HHH^{a=0}(\beta)$. In Theorem \ref{thm: beta parity}, we show that Coxeter braids on 4 strands are also parity.

Note that the class of Coxeter links has some overlap with torus links. In particular we use the computation of $\HHH(\FT_4^k)=\HHH(T(k,4k))$ from \cite{Hog1} as a base case for our recursion (which corresponds to $d_1=d_2=d_3=d_4=k$), rather than re-derive it.

% We refer to Section 6 of \cite{T} and its references for all definitions.

\subsection{Relation to the ideal $J$}
\label{sec: intro ideal J}

In this section, we use the $y$-ification of Khovanov-Rozansky homology, denoted by $\HY(\beta)$. It was defined in \cite{GH} where it was shown to be a topological invariant of the link obtained by closing $\beta$. We recall some results and conjectures describing $\HY(\beta)$ for Coxeter braids.

Define the ideal
    $$J(d_1, \dots, d_n) =  \bigcap_{i < j} (t_i-t_j, x_i - x_j)^{d_{i}} \subseteq \C[x_1, \dots, x_n, y_1, \dots, y_n].$$
    By the results of \cite{T}, such ideals are closely related to the equivariant Borel-Moore homology of certain affine Springer fibers. For $d_1=\ldots=d_n$, these ideals appear in the work of Haiman \cite{Haiman} on the geometry of Hilbert schemes of points on the plane.

Note that for both the ideal $J$ and the braid $\beta$, the largest $d_n$ is irrelevant, but we will continue including it for clarity.

If Conjecture \ref{conj: parity all n} holds, the following theorem of Gorsky and Hogancamp gives a description of the $a=0$ piece of both $\HY$ and $\HHH$ in terms of the ideal $J$.

    \begin{thm}[Gorsky, Hogancamp \cite{GH}]
\label{thm: HY(beta)}
    Assume that $\beta = \JM_n^{d_1} \ldots \JM_2^{d_{n-1}}$ and $\HHH^{a=0}(\beta)$ is parity. Then

    \begin{enumerate}
        \item $\HY^{a=0}(\beta) = \HHH^{a=0}(\beta) \otimes \C[y_1, \dots, y_n]$ and $\HHH^{a=0}(\beta) = \HY^{a=0}(\beta)/(y)$
        \item $\HY^{a=0}(\beta) = J(d_1,\ldots,d_n)$.
    \end{enumerate}
In particular, this also implies that the ideal $J(d_1,\ldots,d_n)$ is free over $\C[y_1,\ldots,y_n]$.  
\end{thm}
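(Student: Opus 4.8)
I would treat the two numbered claims separately: part~(1) is a ``formality under parity'' statement that flows from the general structure of $y$-ification, while part~(2) is a concrete module computation that I would carry out by induction on $n$, matching a recursion for $\HY$ against the obvious recursion for the ideals $J$.

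For part~(1), recall that $\HY(\beta)$ is obtained by $y$-ifying the Rouquier complex $F_\beta$: the underlying object is $F_\beta\otimes\C[y_1,\dots,y_n]$ carrying a differential $D=d_0+d_1+\cdots$ in which $d_0$ is the original differential and each $d_k$ with $k\ge 1$ has $\mathbf{y}$-degree $k$, after which one takes Hochschild homology. Filtering the resulting complex by $\mathbf{y}$-degree produces a spectral sequence whose first page is $\HHH(\beta)\otimes\C[y_1,\dots,y_n]$; in the $a=0$ part this is $\HHH^{a=0}(\beta)\otimes\C[y_1,\dots,y_n]$, which under the parity hypothesis is concentrated in even homological degrees, since the $y_i$ themselves carry even homological degree. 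Every higher differential $d_r$ raises homological degree by one and hence exchanges even and odd $T$-degrees, so all $d_r$ vanish; the sequence degenerates at $E_1$, and because that page is $\C[y_1,\dots,y_n]$-free there is no extension problem, giving $\HY^{a=0}(\beta)\cong\HHH^{a=0}(\beta)\otimes\C[y_1,\dots,y_n]$ as bigraded $\C[\mathbf{x},\mathbf{y}]$-modules. Setting $\mathbf{y}=0$ in the defining complex recovers the complex computing $\HHH^{a=0}(\beta)$, which yields $\HHH^{a=0}(\beta)=\HY^{a=0}(\beta)/(y)$. The only delicate point is the grading bookkeeping that forces the parity hypothesis to kill every differential, and this is precisely the relevant general statement of \cite{GH}.

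For part~(2), I would induct on $n$. A clean base case is the full-twist braid $d_1=\cdots=d_n=k$: here $\HHH(\FT_n^k)=\HHH(T(k,kn))$ is computed in \cite{Hog1}, and Haiman's work \cite{Haiman} identifies the corresponding module with $\bigcap_{i<j}(x_i-x_j,y_i-y_j)^k=J(k,\dots,k)$; one can alternatively bottom out at $n=2$, where the closure of $\JM_2^k$ is the $(2,2k)$-torus link and $\HY^{a=0}$ is the explicit ideal $(x_1-x_2,y_1-y_2)^k$ from \cite{GH}. For the inductive step, use the factorization
$$\beta(d_1,\dots,d_n)=\JM_n^{d_1}\cdot\beta(d_2,\dots,d_n),$$
in which $\beta(d_2,\dots,d_n)$ is viewed as a braid on the $n-1$ strands carrying the variables $x_2,\dots,x_n$, together with the parallel decomposition of ideals
$$J(d_1,\dots,d_n)=J(d_2,\dots,d_n)\,\C[\mathbf{x},\mathbf{y}]\ \cap\ \bigcap_{j=2}^{n}\bigl(x_1-x_j,\ y_1-y_j\bigr)^{d_1}.$$
It then suffices to establish a parabolic-induction formula for $\HY^{a=0}$: if $M$ is a complex of Soergel bimodules supported on the strands carrying $x_2,\dots,x_n$, then $\HY^{a=0}$ of $\JM_n^{d_1}$ composed with $M$ equals the extension of $\HY^{a=0}(M)$ to $\C[\mathbf{x},\mathbf{y}]$ intersected with $\bigl(\bigcap_{j\ge 2}(x_1-x_j,y_1-y_j)\bigr)^{d_1}$. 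The ingredients are the explicit short form of the $y$-ified Rouquier complex of $\JM_n$, the fact that Hochschild homology of $\JM_n^{d_1}\otimes M$ can be computed in stages by a partial-trace argument over the new strand, and the parity input from part~(1), which ensures that the length-$d_1$ complex modelling $\JM_n^{d_1}$ carries no surviving higher differentials.

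The main obstacle is precisely this inductive step: one has to compute the partial Markov trace of the $d_1$-st power of a Jucys--Murphy element against an arbitrary complex on $n-1$ strands and recognize the outcome as intersection with the $d_1$-st power of a diagonal-type ideal. On the geometric side this is the non-trivial collapse of the naive length-$d_1$ complex onto the ideal $J$, and tracking those cancellations---and checking their compatibility with the bigrading---is the real content of the argument. Once part~(2) is in hand, the concluding assertion that $J(d_1,\dots,d_n)$ is free over $\C[y_1,\dots,y_n]$ is immediate, since combining parts~(1) and~(2) exhibits $J(d_1,\dots,d_n)=\HY^{a=0}(\beta)=\HHH^{a=0}(\beta)\otimes\C[y_1,\dots,y_n]$, which is visibly $\C[y_1,\dots,y_n]$-free.
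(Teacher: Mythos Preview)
The paper does not give a proof of this theorem: it is quoted from \cite{GH} as a black box and then invoked in the proof of Theorem~\ref{thm: J free over y}. So there is no in-paper argument to compare against, only the original argument in \cite{GH}.

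Your sketch of part~(1) is correct and matches the mechanism in \cite{GH}: the $\mathbf{y}$-filtration spectral sequence has $E_1=\HHH^{a=0}(\beta)\otimes\C[y_1,\dots,y_n]$, parity forces all higher differentials to vanish, and freeness over $\C[\mathbf{y}]$ kills extension problems.

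Your plan for part~(2), however, is not the route taken in \cite{GH} and has a real gap. Gorsky--Hogancamp do \emph{not} induct on $n$ via a partial Markov trace of $\JM_n^{d_1}$. Their argument instead (i) uses the unit map to exhibit $\HY^{a=0}(\beta)$ as an ideal in $\C[\mathbf{x},\mathbf{y}]$ for any positive pure braid, and (ii) identifies that ideal by localizing at each $y_i-y_j$: in the $y$-ified category inverting $y_i-y_j$ inverts the corresponding crossing, so on each localization the computation collapses to the two-strand case, and intersecting the localizations gives $\bigcap_{i<j}(x_i-x_j,y_i-y_j)^{d_i}$. Your proposed ``parabolic induction formula''---that the partial trace of $\JM_n^{d_1}$ against an arbitrary $(n-1)$-strand complex is intersection with $\bigl(\bigcap_{j\ge 2}(x_1-x_j,y_1-y_j)\bigr)^{d_1}$---is not an established result, and you rightly flag it as the main obstacle; it is essentially as hard as the theorem itself. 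There is also a logical issue with the induction: to invoke part~(1) for $\beta(d_2,\dots,d_n)$ you would need parity of that $(n-1)$-strand braid, but the hypothesis only gives you parity of the full $n$-strand braid $\beta$, so the inductive scheme does not close as stated.
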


\begin{conj}
\label{conj: HY=J}
Suppose that $0 \leq d_1 \leq \cdots \leq d_1$, $\beta = \JM_n^{d_1} \ldots \JM_2^{d_{n-1}}$, and $J = J(d_1,\ldots,d_n)$. Then $\HHH^{a=0}(\beta)$ is parity and
$$
\HY^{a=0}(\beta)\simeq J
$$
and
$$\HHH^{a=0}(\beta)\simeq J/(y)J.$$
\end{conj}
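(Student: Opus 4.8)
The plan is to reduce the statement to the parity of $\HHH^{a=0}(\beta)$ and then invoke the Gorsky--Hogancamp Theorem~\ref{thm: HY(beta)}. Concretely, suppose we have shown that $\HHH^{a=0}(\beta)$ is supported in even homological degrees. Then the hypothesis of Theorem~\ref{thm: HY(beta)} is satisfied, so part (2) gives the module isomorphism $\HY^{a=0}(\beta)\simeq J(d_1,\ldots,d_n)$, and part (1) gives $\HHH^{a=0}(\beta)\simeq \HY^{a=0}(\beta)/(y_1,\ldots,y_n)\HY^{a=0}(\beta)\simeq J/(y)J$, which is exactly the assertion (and freeness of $J$ over $\C[y_1,\ldots,y_n]$ comes along for free). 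So all of the work lies in proving parity; for $n\le 4$ this is Theorem~\ref{thm: beta parity}, and for general $n$ it is Conjecture~\ref{conj: parity all n}.

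To prove parity I would run the recursive scheme of Elias and Hogancamp~\cite{EH}. Write $\beta=\beta(d_1,\ldots,d_n)=\JM_n^{d_1}\JM_{n-1}^{d_2}\cdots\JM_2^{d_{n-1}}$; since the top parameter $d_n$ is irrelevant, the data is $0\le d_1\le\cdots\le d_{n-1}$. The base case is $d_1=\cdots=d_{n-1}$, i.e.\ $\beta=\FT_n^{d_1}$, whose closure is the torus link $T(d_1,nd_1)$; here $\HHH$ is already known to be parity by Hogancamp~\cite{Hog1}, and I would take it as given rather than re-derive it. For the inductive step one peels off Jucys--Murphy factors one exponent at a time: the Rouquier complex of $\JM_k$ admits a convenient inductive description, and resolving it expresses the complex computing $\HHH(\beta(d_1,\ldots,d_n))$ as a finite iterated mapping cone whose building blocks are the (inductively known, parity) homologies of braids $\beta(d_1',\ldots,d_n')$ with strictly smaller parameters, glued by explicit degree-shift maps. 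One then shows that each differential in this complex either acts as zero on homology or pairs two summands of the same homological parity, so that on homology only even $T$-degrees survive --- in particular in the $a=0$ part, which is all Theorem~\ref{thm: HY(beta)} needs.

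The main obstacle is precisely this last parity bookkeeping. Nothing in the formalism forces the cancellations in the iterated cone to be parity-preserving; it has to be checked, and the check depends on the relative sizes of $d_1,d_2,d_3,\ldots$. For $n\le 4$ there are only finitely many ``shapes'' of intermediate complex, organized by the ordering and the gaps among $d_1,d_2,d_3$, and parity can be verified by a finite (if lengthy) case analysis --- this is the technical heart of the computation. For general $n$ the combinatorics of the recursion grows rapidly and I see no uniform way to control the signs, which is why Conjecture~\ref{conj: parity all n}, and hence the present statement for $n\ge 5$, remains open.

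As a cross-check I would also read off the bigraded $(q,t)$ Hilbert series of $\HHH^{a=0}(\beta)$ from the terminal complex of the recursion and compare it against the Hilbert series of $J/(y)J$, computed from the defining presentation of $J$ as an intersection of powers of the diagonal ideals $(x_i-x_j,\,y_i-y_j)$. Agreement of the two series (together with the module-level isomorphism of Theorem~\ref{thm: HY(beta)}) confirms both the parity computation and the final identification; for $n\le 4$ the requisite intersection of ideals is explicit enough to make this feasible.
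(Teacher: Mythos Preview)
The statement you are addressing is labeled as a \emph{conjecture} in the paper, and indeed the paper does not prove it for general $n$; it only establishes the case $n\le 4$ (Theorem~\ref{thm: J free over y}). Your reduction is exactly the one the paper uses there: once parity of $\HHH^{a=0}(\beta)$ is known, Theorem~\ref{thm: HY(beta)} of Gorsky--Hogancamp immediately yields both isomorphisms and the freeness of $J$ over $\C[y_1,\ldots,y_n]$. You also correctly identify that the entire content lies in the parity statement, that $\FT_n^{d_1}$ is the base case taken from \cite{Hog1}, and that the general case $n\ge 5$ remains open.

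Where your sketch diverges from the paper is in the mechanics of the parity recursion. You describe the inductive step as peeling off Jucys--Murphy factors to land on Coxeter braids $\beta(d_1',\ldots,d_n')$ with strictly smaller parameters. The paper's recursion does not stay inside the class of Coxeter braids: the intermediate objects are braids decorated with the Elias--Hogancamp complexes $K_1,\ldots,K_n$ (the $A$, $B$, $C$ families in Section~2), and the delicate point is that the base cases force one to handle \emph{twisted} versions such as $\sigma_3^{2k}B(n,0)$ and $\widetilde{\JM}_3\sigma_3^{2k}C(n)$, which are outside the original family. Parity follows not from analysing differentials but from the general principle that a cone of two parity complexes is parity; the work is in closing up the recursion so that every leaf is manifestly parity. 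Finally, your proposed cross-check of matching Hilbert series against $J/(y)J$ for $n=4$ is not available: the paper explicitly notes that, unlike $n=3$, there is no independent general computation of the Hilbert series of $J(d_1,d_2,d_3,d_4)$ to compare to.
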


This description doesn't guarantee that we can write out $\HHH^{a=0}(\beta)$ explicitly, as finding the bigraded Hilbert series for $J(d_1, \dots, d_n)$ is difficult in general. However, the recursive method we use to show that a braid is parity simultaneously gives a recursive way to compute $\HHH^{a=0}(\beta)$ as a rational function in $q$ and $t$.

In \cite{T}, the author shows that Coxeter links on 3 strands are parity and shows that Conjecture \ref{conj: HY=J} holds for $n=3$. In this paper, we use the same tools to show that the corresponding class of braids on 4 strands is also parity, and recursively calculate HHH in this case as well.

\begin{thm}
\label{thm: J free over y} 
For the braid $\beta = \JM_4^{d_1}\JM_3^{d_2} \JM_2^{d_3}$ with $0\leq d_1 \leq d_2 \leq d_3 \leq d_4$, and $J = J(d_1,d_2,d_3,d_4)$,
$$\HY^{a=0}(\beta) = J.$$

In particular, the ideal $J$ is free over as a module over $\C[y_1, \dots, y_4]$, and its bigraded Hilbert series is given by
$$\frac{1}{(1-t)^4}\HHH^{a=0}(\beta).$$
\end{thm}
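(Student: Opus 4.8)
The plan is to obtain this theorem as a formal consequence of the parity statement for $4$-strand Coxeter braids (Theorem~\ref{thm: beta parity}) together with the Gorsky--Hogancamp description of $\HY^{a=0}$ and $\HHH^{a=0}$ (Theorem~\ref{thm: HY(beta)}). The braid $\beta = \JM_4^{d_1}\JM_3^{d_2}\JM_2^{d_3}$ is exactly of the form $\JM_n^{d_1}\cdots\JM_2^{d_{n-1}}$ with $n=4$ demanded by the hypotheses of Theorem~\ref{thm: HY(beta)}, so the only nontrivial input that theorem requires is that $\HHH^{a=0}(\beta)$ be parity. For this I would note that $\HHH^{a=0}(\beta)$ is, by definition, the $a$-degree-zero piece of $\HHH(\beta)$, hence a direct summand of $\HHH(\beta)$ as a $(q,t)$-bigraded vector space; consequently it is supported in even homological degrees as soon as $\HHH(\beta)$ is. Theorem~\ref{thm: beta parity} asserts precisely that the Coxeter braid on $4$ strands is parity (establishing the $n=4$ case of Conjecture~\ref{conj: parity all n}), so $\HHH^{a=0}(\beta)$ is parity and Theorem~\ref{thm: HY(beta)} applies.

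Granting parity, I would then simply read off the conclusions. Part~(2) of Theorem~\ref{thm: HY(beta)} gives $\HY^{a=0}(\beta) = J(d_1,d_2,d_3,d_4)$, which is the displayed equality. Part~(1) provides a bigraded isomorphism $\HY^{a=0}(\beta)\cong\HHH^{a=0}(\beta)\otimes_{\C}\C[y_1,\dots,y_4]$ of $\C[y_1,\dots,y_4]$-modules; combined with the previous identification this yields $J\cong\HHH^{a=0}(\beta)\otimes_{\C}\C[y_1,\dots,y_4]$, so any homogeneous $\C$-basis of $\HHH^{a=0}(\beta)$ is a free $\C[y_1,\dots,y_4]$-basis of $J$, and in particular $J$ is free over $\C[y_1,\dots,y_4]$. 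Passing to bigraded Hilbert series and using that the polynomial ring $\C[y_1,\dots,y_4]$, whose generators $y_i$ have $q$-degree $0$ and $t$-degree $1$, has bigraded Hilbert series $(1-t)^{-4}$, gives $\Hilb(J) = (1-t)^{-4}\,\HHH^{a=0}(\beta)$, completing the proof.

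As the argument above makes clear, once Theorem~\ref{thm: beta parity} is in hand this statement is pure bookkeeping, so the main obstacle is the parity theorem itself. That in turn is proved by carrying out the Elias--Hogancamp recursive simplification of the Rouquier complex of $\beta$ and checking, by an induction reducing the exponents $d_i$ (with the torus-link computation $\HHH(\FT_4^{k})=\HHH(T(k,4k))$, i.e.\ the case $d_1=d_2=d_3=d_4=k$, as the base case), that no odd homological degrees are ever introduced; this is the technical heart of the paper and is also the point at which the restriction to at most $4$ strands is used.
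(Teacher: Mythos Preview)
Your proposal is correct and follows essentially the same approach as the paper: invoke Theorem~\ref{thm: beta parity} to get parity, then apply both parts of Theorem~\ref{thm: HY(beta)} to obtain $\HY^{a=0}(\beta)=J$ and the tensor decomposition $J\cong\HHH^{a=0}(\beta)\otimes\C[y_1,\dots,y_4]$, from which freeness and the Hilbert series formula follow immediately. The paper's own proof is in fact terser than yours, recording only the first two steps and leaving the Hilbert series consequence implicit.
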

\begin{proof}
    We show that the braid $\beta$ is parity in Theorem \ref{thm: beta parity}. So by Theorem \ref{thm: HY(beta)}, 
    $$\HY^{a=0}(\beta) = J(d_1,d_2,d_3,d_4) = \HHH^{a=0}(\beta) \otimes \C[y_1, \dots, y_n].$$
\end{proof}

For $n=3$, the bigraded Hilbert series for $J(d_1,d_2,d_3)$ is explicitly computed in general in \cite{T}, but for $n=4$ there is no similar calculation to compare to. However, the recursions in this paper can be used to compute this series explicitly for any particular $d_1,d_2,d_3,d_4$.

\begin{rem}
Note that  \cite[Conjecture 4.8]{T} predicts a closed formula for the Hilbert series as a sum of certain rational functions over 10 standard Young tableaux of size 4. We plan to verify this conjecture in a future work.
\end{rem}

We are optimistic that Theorem \ref{thm: HY(beta)} can be generalized to arbitrary $n$ using the same recursive process, proving Conjecture \ref{conj: HY=J}. This would also confirm that the ideal $J(d_1, \dots, d_n)$ is free over $y$'s, as conjectured by the author in \cite{T}, and give a recursive formula for its bigraded Hilbert series.

\section{Calculations}

Here we compute Khovanov-Rozansky homology of Coxeter braids on 4 strands. Let 
$$\beta = \beta(d_1,d_2,d_3,d_4) = (\FT_2)^{d_3-d_2} \cdot (\FT_3)^{d_2-d_1} \cdot (\FT_4)^{d_1},$$
where $\FT_i$ is the full twist on the first $i$ strands.
We will also use the Jucys-Murphy braids $\JM_2=\sigma_1^2$, $\JM_3=\sigma_2\sigma_1^2\sigma_2$, and $\JM_4 = \sigma_3\sigma_2\sigma_1^2\sigma_2\sigma_3$. Note that $\FT_n = \JM_2 \JM_3 \cdots \JM_n$.

We use the recursion for triply graded Khovanov-Rozansky homology from \cite{EH}, as described in \cite{GKS}. Here $K_n$ are certain complexes of Soergel bimodules. We refer to \cite{GKS} and references therein for all details. Figure \ref{fig: recursions} below shows the properties of $K_n$ that we will use in our recursions. 

Essentially, Figure \ref{fig: recursions}(a) says that we can insert a $K_1$ anywhere up to a grading shift. Figure \ref{fig: recursions}(b) says that $K_n$ absorbs crossings within the $n$ strands that it spans. Figure \ref{fig: recursions}(c) allows us to 'close up' a strand passing through a $K_n$ as long as there are no crossings in the way, and Figure \ref{fig: recursions}(d) is the main recursive step that allows $K_n$ to grow if there is a strand wrapping around it.

\begin{figure}[h!]
\begin{center}
\begin{tikzpicture}
\draw (-2,1.25) node {$(a)$};
\draw (-1,1.25) node {\Large $\frac{1}{1-q}$};
\draw (0,0)--(0,1)--(-0.4,1)--(-0.4,1.5)--(0.4,1.5)--(0.4,1)--(0,1);
\draw (0,1.5)--(0,2.5);
\draw (0,1.25) node {$K_1$};
\draw (1,1.25) node {=};
\draw (1.5,0)--(1.5,2.5);
\end{tikzpicture}
\quad
\begin{tikzpicture}
\draw (-1,1.25) node {(b)};
\draw  (-0.5,1)--(-0.5,1.5)--(0.5,1.5)--(0.5,1)--(-0.5,1);
\draw (-0.4,0)--(-0.4,1);
\draw (0,0.5) node {$\cdots$};
\draw (0.4,0)--(0.4,1);
\draw (0,1.25) node {$K_n$};
\draw (-0.4,1.5)--(-0.4,2.5);
\draw (0.4,1.5)--(0.4,2.5);
\draw (-0.2,2.5)..controls (-0.2,2) and (0.2,2)..(0.2,1.5);
\draw[white,line width=3] (-0.2,1.5)..controls (-0.2,2) and (0.2,2)..(0.2,2.5);
\draw (-0.2,1.5)..controls (-0.2,2) and (0.2,2)..(0.2,2.5);
\draw (1,1.25) node {=};
\draw  (1.5,1)--(1.5,1.5)--(2.5,1.5)--(2.5,1)--(1.5,1);
\draw (1.6,0)--(1.6,1);
\draw (2,0.5) node {$\cdots$};
\draw (2.4,0)--(2.4,1);
\draw (2,1.25) node {$K_n$};
\draw (1.6,1.5)--(1.6,2.5);
\draw (2.4,1.5)--(2.4,2.5);
\draw (2,2) node {$\cdots$};
\draw (3,1.25) node {=};
\draw  (3.5,1)--(3.5,1.5)--(4.5,1.5)--(4.5,1)--(3.5,1);
\draw (3.6,0)--(3.6,1);
\draw (4,2) node {$\cdots$};
\draw (4.4,0)--(4.4,1);
\draw (4,1.25) node {$K_n$};
\draw (3.6,1.5)--(3.6,2.5);
\draw (4.4,1.5)--(4.4,2.5);
\draw (3.8,1)..controls (3.8,0.5) and (4.2,0.5)..(4.2,0);
\draw[white,line width=3] (3.8,0)..controls (3.8,0.5) and (4.2,0.5)..(4.2,1);
\draw (3.8,0)..controls (3.8,0.5) and (4.2,0.5)..(4.2,1);
\end{tikzpicture}
\quad
\begin{tikzpicture}
\draw (-1,1.25) node {(c)};
\draw  (-0.5,1)--(-0.5,1.5)--(0.5,1.5)--(0.5,1)--(-0.5,1);
\draw (-0.4,0)--(-0.4,1);
\draw (-0.1,0.5) node {$\cdots$};
\draw (-0.1,2) node {$\cdots$};
\draw (0,1.25) node {$K_{n+1}$};
\draw (-0.4,1.5)--(-0.4,2.5);
\draw (0.4,1.5)..controls (0.4,2) and (1,2)..(1,1.5);
\draw (0.4,1)..controls (0.4,0.5) and (1,0.5)..(1,1);
\draw (0.2,0)--(0.2,1);
\draw (0.2,1.5)--(0.2,2.5);
\draw (1,1)--(1,1.5);
\draw (2,1.25) node {$=(t^n+a)$};
\draw  (3,1)--(3,1.5)--(4,1.5)--(4,1)--(3,1);
\draw (3.2,0)--(3.2,1);
\draw (3.8,0)--(3.8,1);
\draw (3.5,0.5) node {$\cdots$};
\draw (3.5,2) node {$\cdots$};
\draw (3.5,1.25) node {$K_{n}$};
\draw (3.2,1.5)--(3.2,2.5);
\draw (3.8,1.5)--(3.8,2.5);
\end{tikzpicture}
\end{center}

\begin{center}
\begin{tikzpicture}
\draw (-1.5,1.25) node {(d)};
\draw  (-0.5,1)--(-0.5,1.5)--(0.5,1.5)--(0.5,1)--(-0.5,1);
\draw (0,1.25) node {$K_{n}$};
\draw  (-0.7,2.5)--(-0.7,1);
\draw (-0.4,1.5)--(-0.4,2.5);
\draw (0.4,1.5)--(0.4,2.5);
\draw  (-0.7,1)..controls (-0.7,0.5) and (0.7,0.5)..(0.7,0);
\draw[white,line width=3] (-0.4,1)--(-0.4,0);
\draw [white,line width=3]  (0.4,1)--(0.4,0);
\draw  (-0.4,1)--(-0.4,-1);
\draw    (0.4,1)--(0.4,-1);
\draw [white,line width=3]    (0.7,0)..controls (0.7,-0.5) and (-0.7,-0.5)..(-0.7,-1);
\draw  (0.7,0)..controls (0.7,-0.5) and (-0.7,-0.5)..(-0.7,-1);

\draw (1,1.25) node {$=t^{-n}$};
\draw (1.7,2)..controls (1.5,1) and (1.5,1)..(1.7,0);
\draw  (2,1)--(2,1.5)--(3,1.5)--(3,1)--(2,1);
\draw (2.5,1.25) node {$K_{n+1}$};
\draw (2.1,1.5)--(2.1,2.5);
\draw (2.3,1.5)--(2.3,2.5);
\draw (2.9,1.5)--(2.9,2.5);
\draw (2.1,1)--(2.1,-1);
\draw (2.3,1)--(2.3,-1);
\draw (2.9,1)--(2.9,-1);
\draw [->] (3.5,1.25)--(4.3,1.25);
\draw (4.5,1.25) node {$q$};
\draw  (5,1)--(5,1.5)--(6,1.5)--(6,1)--(5,1);
\draw (5.5,1.25) node {$K_{n}$};
\draw (5.1,1.5)--(5.1,2.5);
\draw (5.9,1.5)--(5.9,2.5);
\draw (5.1,1)--(5.1,-1);
\draw (5.9,1)--(5.9,-1);
\draw (4.7,2.5)--(4.7,-1);
\draw (6.5,2)..controls (6.7,1) and (6.7,1)..(6.5,0);
\end{tikzpicture}
\end{center}
\caption{Recursions for $\HHH$}
\label{fig: recursions}
\end{figure}
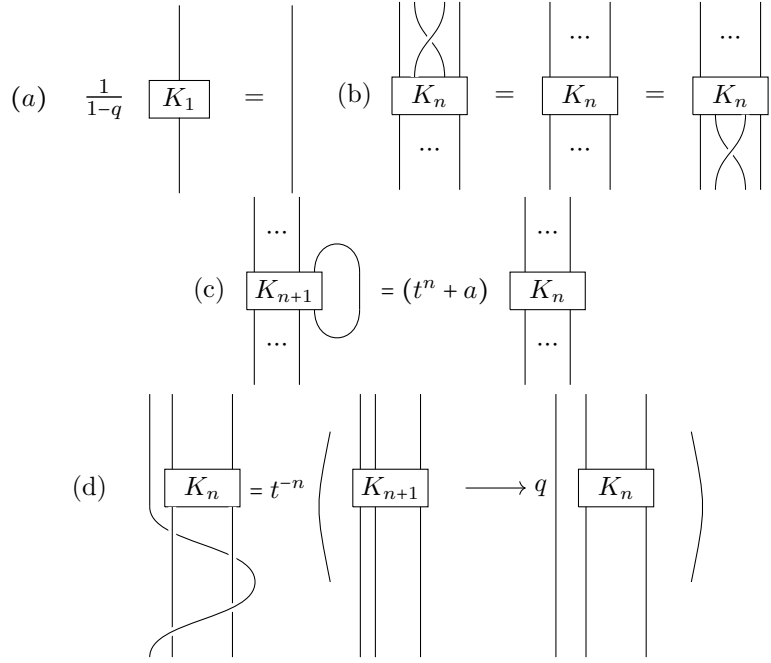

\begin{defn}
    For any braid or braid-like diagram $\beta$, we say that $\HHH(\beta)$ is parity if it is supported in only even homological ($T$) degrees, where $T^2 = \frac{t}{q}$.
\end{defn}

Note that $\HHH(\beta)$ must be parity for its graded Hilbert series to be rational in terms of $(q,a,t)$. 

\begin{rem}[\cite{GKS}]
    If both diagrams on the right-hand side of Figure \ref{fig: recursions}(d) are parity, then so is the left-hand side, and we can replace the map with addition on the level of rational functions (or a direct sum on the level of complexes). This implies that if we use Figure \ref{fig: recursions}(d) repeatedly to break down a braid $\beta$ into complexes that are known to be parity, then every complex along the way, including $\beta$ itself, is parity.
\end{rem}

We will make regular use of the fact that $$\FT_n = \JM_2\JM_3 \cdots \JM_n,$$ and that $\sigma_i$ commutes with 
$\FT_n$ for all $1 \leq i\leq n$ and with $\JM_n$ for $1<i<n$. In other words, the full twist commutes with all crossings on its strands, and the Jucys-Murphy braid commutes with internal crossings. Additionally, since $K_n$ absorbs crossings by Figure \ref{fig: recursions}(b), for $n \leq m$, we have that
$$K_n \FT_m = K_n \JM_{n+1} \cdots \JM_m.$$

We will also use the conjugation invariance of $\HHH$:
\begin{equation}
\label{eq: conj}
\HHH(\alpha\cdot \beta)=\HHH(\beta\cdot \alpha)
\end{equation}
for any braids $\alpha$ and $\beta$. We will denote the conjugate braids by $\alpha\beta\sim \beta\alpha$. We will additionally use the result from Hogancamp \cite{Hog1} that $\HHH(\FT_n^k)$ is parity for all $n,k \geq 0$.

Finally, note that Figure \ref{fig: recursions}(a), (b), and (d) are local, i.e. they hold on the level of homotopy equivalences of chain complexes. Braid conjugation and the closing up step (c) are non-local, and only hold on the level of $\HHH$.

\begin{defn}
Let
\begin{itemize}
    \item $A(n,m,l) = K_1\FT_2^{l}\FT_3^{m} \FT_4^{n}$,
    \item $B(n,m) = K_2 \FT_3^{m} \FT_4^{n}$,
    \item $C(n) = K_3 \FT_4^{n}$.
\end{itemize}
\end{defn}

In this notation,
$$K_1\beta(d_1,d_2,d_3,d_4) = A(d_1, d_2-d_1, d_3-d_2).$$

Assuming that all braids are parity, we can write Figure \ref{fig: recursions}(d) in the form
$$K_n \JM_{n+1} = t^{-n}K_{n+1} + qt^{-n}K_n.$$ 
By Figure \ref{fig: recursions}(b), $K_n \JM_{n+1} = K_n \FT_{n+1}$, so we immediately get the following recursions:

\begin{equation}
\label{A recursion}
A(n,m,l) = t^{-1} B(n,m) + qt^{-1}A(n,m,l-1)
\end{equation}

\begin{equation}
\label{B recursion}
B(n,m) = t^{-2} C(n) + qt^{-2}B(n,m-1)
\end{equation}

\begin{equation}
\label{C recursion}
C(n) = t^{-3}K_4 + qt^{-3}C(n-1)
\end{equation}

These recursions let us proceed by induction, as every application either decreases $n,m,$ or $l$ or shifts us from $A$ to $B$ or $B$ to $C$. Also note that they don't involve any closing up or conjugation, so they are all local. The majority of the work lies in the `base cases', when one or more of $l,m,n$ are 0. 

\subsection{Recursion for $C(n)$}
\label{sec: recursions for C}

Here we calculate $\HHH$ recursively for the braids $C(n)$ and for a twisted version $\sigma_3^{2k}C(n)$ that we will need in Section \ref{sec: recursions for B}.

\begin{lemma}
\label{lemma: C(n)}
    The braid $C(n)$ is parity for all $n \geq 0$, and
    $$\HHH(C(n))=t^{-3}\frac{1-(qt^{-3})^n}{1-qt^{-3}}\HHH(K_4)+\frac{(qt^{-3})^n(t^2+a)(t+a)(1+a)^2}{(1-q)}.$$
\end{lemma}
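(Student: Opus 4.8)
The plan is to induct on $n$, using the recursion \eqref{C recursion}, with the $n=0$ case handled separately as the genuine base case. For $n=0$ we have $C(0) = K_3$, so I first need to compute $\HHH(K_3)$ and verify it is parity. Here I would apply Figure \ref{fig: recursions}(c) twice to close up the two ``extra'' strands that a lone $K_3$ carries: each closing-up step contributes a factor $(t^j+a)$ for the appropriate $j$, reducing $K_3 \rightsquigarrow K_2 \rightsquigarrow K_1$, and then Figure \ref{fig: recursions}(a) replaces $K_1$ by $\frac{1}{1-q}$. Tracking the grading shifts, this should yield $\HHH(K_3) = \frac{(t^2+a)(t+a)(1+a)^2}{1-q}$, which matches the claimed formula at $n=0$ since the geometric-sum term vanishes and the second term becomes exactly this expression. (The $(1+a)^2$ presumably comes from two unknot-like closures contributing $(t^0+a) = (1+a)$ each, and indeed the unknot has $\HHH = \frac{1+a}{1-q}$.)

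For the inductive step, assume the formula holds for $C(n-1)$ and that $C(n-1)$ is parity. By \eqref{C recursion},
$$
C(n) = t^{-3}K_4 + qt^{-3}C(n-1).
$$
Since $\HHH(K_4) = \HHH(\FT_4^0)$-type data is parity by Hogancamp \cite{Hog1} (more precisely $K_4$ appears as the terminal object in these recursions and its homology is parity), and $C(n-1)$ is parity by induction, the Remark following Figure \ref{fig: recursions} tells us $C(n)$ is parity and the recursion becomes literal addition of rational functions. Then I would substitute the inductive formula for $\HHH(C(n-1))$ and simplify: the $K_4$ terms assemble into $t^{-3}\bigl(1 + qt^{-3}\frac{1-(qt^{-3})^{n-1}}{1-qt^{-3}}\bigr)\HHH(K_4) = t^{-3}\frac{1-(qt^{-3})^n}{1-qt^{-3}}\HHH(K_4)$, and the second term picks up one more factor of $qt^{-3}$ to become $\frac{(qt^{-3})^n(t^2+a)(t+a)(1+a)^2}{1-q}$. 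This is a routine geometric-series bookkeeping step.

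The main obstacle is the $n=0$ base case: correctly computing $\HHH(K_3)$ with all grading shifts accounted for, i.e.\ making sure the closing-up moves of Figure \ref{fig: recursions}(c) are applied in a legitimate configuration (no crossings in the way) and that the resulting powers of $t$ and factors of $a$ come out as $(t^2+a)(t+a)(1+a)^2$ rather than some permutation. Everything after that—parity propagation and the geometric sum—is forced. I should also double-check that $K_4$ genuinely has parity homology so that the Remark applies at every stage; this is where the citation to \cite{Hog1} for $\HHH(\FT_4^k)$, specialized appropriately, does the work.
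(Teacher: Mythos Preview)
Your approach matches the paper's: induct on $n$ using recursion \eqref{C recursion}, handling $C(0)$ and $K_4$ directly by closing up via Figure~\ref{fig: recursions}(c). Two small fixes to your base case: remember that $C(0)$ is $K_3$ on the first three strands \emph{together with an idle fourth strand}, so one $\frac{1+a}{1-q}$ factor comes from that unknot component (insert a $K_1$ on it via (a), then close), while closing $K_3\to K_2\to K_1\to\varnothing$ via (c) gives $(t^2+a)(t+a)(1+a)$; and $\HHH(K_4)=(t^3+a)(t^2+a)(t+a)(1+a)$ is obtained the same way (four applications of (c)), so its parity is immediate and does not require \cite{Hog1}.
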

\begin{proof}
    The braid $C(0)$ has $K_3$ on the first 3 strands and an empty fourth strand. We can close up the $K_3$ using Figure \ref{fig: recursions}(c) and introduce a $K_1$ to close up the last strand, so
    $$\HHH(C(0)) = (t^2+a)(t+a)(1+a)^2/(1-q).$$
    We also have that
    $$\HHH(K_4) = (t^3+a)(t^2+a)(t+a)(1+a).$$
    So by induction, both terms on the right hand side of recursion \eqref{C recursion} are parity. Therefore $C(n)$ is parity, and can be computed recursively. 
\end{proof}

\begin{lemma}
    \label{lemma: twisted C(n)}
    The braid $\sigma_3^{2k}C(n)$ is parity for all $k,n \geq 0$, and
\begin{equation}
\label{eq: twisted C}
\HHH(\sigma_3^{2k}C(n))=t^{-3}\frac{1-(qt^{-3})^n}{1-qt^{-3}}\HHH(K_4)+(qt^{-3})^n(t^2+a)(t+a)(1-q)\HHH(\FT_2^k).
\end{equation}
\end{lemma}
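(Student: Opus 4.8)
The plan is to follow the same pattern as the proof of Lemma~\ref{lemma: C(n)}: the recursion in $n$ will be formally identical to \eqref{C recursion}, and the only real work is the new base case $\sigma_3^{2k}C(0)$.

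For the recursion, fix $n \geq 1$ and write $\sigma_3^{2k}C(n) = \sigma_3^{2k}K_3\FT_4^n = \sigma_3^{2k}\bigl(K_3\JM_4\bigr)\FT_4^{n-1}$, using that $K_3$ absorbs the internal factors $\JM_2,\JM_3$ of $\FT_4$ by Figure~\ref{fig: recursions}(b). Applying Figure~\ref{fig: recursions}(d) locally to the subdiagram $K_3\JM_4$, in the parity form $K_n\JM_{n+1} = t^{-n}K_{n+1} + qt^{-n}K_n$, gives
$$\sigma_3^{2k}C(n) = t^{-3}\,\sigma_3^{2k}K_4\FT_4^{n-1} + qt^{-3}\,\sigma_3^{2k}K_3\FT_4^{n-1}.$$
In the first term $K_4$ absorbs both $\sigma_3^{2k}$ and $\FT_4^{n-1}$, since all of their crossings lie among the first four strands, so that term is simply $t^{-3}K_4$; the second term is $qt^{-3}\,\sigma_3^{2k}C(n-1)$. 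Hence $\HHH(\sigma_3^{2k}C(n)) = t^{-3}\HHH(K_4) + qt^{-3}\HHH(\sigma_3^{2k}C(n-1))$, exactly as in \eqref{C recursion}. Since $\HHH(K_4)$ is parity and $\HHH(\sigma_3^{2k}C(n-1))$ is parity by induction, the remark following Figure~\ref{fig: recursions}(d) shows that $\sigma_3^{2k}C(n)$ is parity.

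For the base case, $\sigma_3^{2k}C(0) = \sigma_3^{2k}K_3$ has $K_3$ on strands $1,2,3$ and $\sigma_3^{2k}$ twisting strands $3$ and $4$. Since strands $1$ and $2$ never meet $\sigma_3$, each of them can be isotoped into a small loop at $K_3$ and closed up using Figure~\ref{fig: recursions}(c); this contributes a factor $(t^2+a)(t+a)$ and leaves a $K_1$ on strand $3$, still linked to strand $4$ by $\sigma_3^{2k}$. By Figure~\ref{fig: recursions}(a), which says $\frac{1}{1-q}K_1 = |$, the presence of this $K_1$ multiplies $\HHH$ by $(1-q)$ relative to a bare strand; with a bare strand in place the closure is just that of the two-strand braid $\sigma_3^{2k}$, i.e.\ of the full twist $\FT_2^k$ on those strands. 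Therefore
$$\HHH(\sigma_3^{2k}C(0)) = (t^2+a)(t+a)(1-q)\HHH(\FT_2^k),$$
which is parity because $\HHH(\FT_2^k)$ is parity by \cite{Hog1}; note that setting $k=0$ recovers the value of $\HHH(C(0))$ computed in Lemma~\ref{lemma: C(n)}.

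Iterating the recursion with this base case gives \eqref{eq: twisted C}. The main subtlety I anticipate is in the base case: one must keep the powers of $(1-q)$ straight when trading the $K_1$ for a bare strand via Figure~\ref{fig: recursions}(a), and one must justify that closing up strands $1$ and $2$ of $K_3$ via Figure~\ref{fig: recursions}(c) is legitimate despite the twist $\sigma_3^{2k}$ on strands $3,4$ — which it is, precisely because those two strands avoid the twist. Everything else runs parallel to the proof of Lemma~\ref{lemma: C(n)}.
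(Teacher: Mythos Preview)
Your proof is correct and follows essentially the same approach as the paper: the same local application of recursion~\eqref{C recursion} with the $\sigma_3^{2k}$ absorbed by $K_4$, and the same base case where strands $1$ and $2$ of $K_3$ are closed up to leave $K_1\FT_2^k$ on the last two strands. Your write-up is somewhat more explicit about the role of Figure~\ref{fig: recursions}(a) in producing the $(1-q)$ factor, but the argument is the same.
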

\begin{proof}
    For $\sigma_3^{2k}C(0)$, we can close up (using Figure \ref{fig: recursions}(c)) the first two strands running through $K_3$, and then we are left with exactly $k$ full twists on the remaining two strands along with a $K_1$. So
    $$\HHH(\sigma_3^{2k}C(0)) = (t^2+a)(t+a)(1-q)\HHH(\FT_2^k).$$

    For $\sigma_3^{2k}C(n)$, we apply recursion \eqref{C recursion} locally to get
    $$\sigma_3^{2k}C(n) = t^{-3} \sigma_3^{2k}K_4 + qt^{-3}\sigma_3^{2k}C(n-1).$$
    The $\sigma_3$ crossings get absorbed  by $K_4$ (using Figure \ref{fig: recursions}(b)), so by induction on $n$, we conclude that $\sigma_3^{2k}C(n)$ is parity with the desired formula for $\HHH$.
\end{proof}

\subsection{Recursions for $B(n,m)$}
\label{sec: recursions for B}
Here we calculate $\HHH$ recursively for the braids $B(n,m)$ and for the twisted braids $\sigma_2^{2}B(n,m)$ that we will need in Section \ref{sec: recursions for A}.

\begin{lemma}
The braid $\sigma_3^{2k} B(0,0)$ is parity for all $k \geq 0$, and 
$$\HHH(\sigma_3^{2k} B(0,0)) = (t+a)(1+a)\HHH(\FT_2^{k}).$$
\end{lemma}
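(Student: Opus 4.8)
The plan is to reduce $\HHH(\sigma_3^{2k}B(0,0))$ to the already-known homology of a full twist on two strands. The first observation is that $B(0,0) = K_2\FT_3^0\FT_4^0 = K_2$, so $\sigma_3^{2k}B(0,0)$ is simply the $4$-strand braid-like diagram carrying the projector $K_2$ on strands $1,2$ and the crossings $\sigma_3^{2k}$ on strands $3,4$. Since $\sigma_3$ involves only strands $3,4$ while $K_2$ involves only strands $1,2$, these two pieces do not interact at all; after taking the braid closure, the diagram is the disjoint, unlinked union of the closure of $K_2$ on two strands and the closure of $\sigma_3^{2k}$ on two strands, the latter being the $(2,2k)$ torus link $\FT_2^k$.

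I would then evaluate the $K_2$-part exactly as in the proof of Lemma~\ref{lemma: twisted C(n)}. Strands $1$ and $2$ run through $K_2 = K_{1+1}$ with no crossings in the way, so Figure~\ref{fig: recursions}(c) closes up one of them at the cost of $(t^{1}+a)$ and leaves a $K_1$ on the remaining strand; that strand is now an isolated loop decorated by $K_1$ (disjoint from the $\sigma_3^{2k}$), and closing it up — Figure~\ref{fig: recursions}(c) with $n=0$, equivalently Figure~\ref{fig: recursions}(a) — contributes a further factor $(t^{0}+a)=(1+a)$. What remains is $\sigma_3^{2k}$ on two strands, with closure $\FT_2^k$, giving
$$\HHH(\sigma_3^{2k}B(0,0)) = (t+a)(1+a)\,\HHH(\FT_2^k).$$

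Finally, for the parity claim: $\HHH(\FT_2^k)$ is parity by Hogancamp's computation \cite{Hog1} recalled above, and $(t+a)(1+a)$ is a polynomial each of whose monomials lies in even homological degree (the variables $t$ and $a$ and the constant $1$ each do), so multiplication preserves being supported in even homological degrees. I do not expect a genuine obstacle here, since this is the simplest of the base cases in Section~\ref{sec: recursions for B}; the only point requiring care is the normalization bookkeeping when the isolated $K_1$-colored unknot is split off — one must check that its contribution is exactly $(1+a)$, consistent with the values already in use such as $\HHH(K_4)=(1+a)(t+a)(t^2+a)(t^3+a)$ and $\HHH(\sigma_3^{2k}C(0))=(t^2+a)(t+a)(1-q)\HHH(\FT_2^k)$ — but this is routine.
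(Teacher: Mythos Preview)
Your argument is correct and follows essentially the same route as the paper: recognize that $B(0,0)=K_2$ sits on strands $1,2$ disjoint from $\sigma_3^{2k}$, then close up the two strands through $K_2$ via Figure~\ref{fig: recursions}(c) to produce the factors $(t+a)$ and $(1+a)$, leaving $\FT_2^k$ on the remaining two strands. The only minor slip is calling the second close-up ``equivalently Figure~\ref{fig: recursions}(a)''; that relation runs in the opposite direction (inserting a $K_1$), whereas what you actually use is Figure~\ref{fig: recursions}(c) with $n=0$.
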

\begin{proof}
 Recall that $B(0,0)$ consists only of a $K_2$ on the left two strands, which does not overlap with $\sigma_3^{2k}$. 
 We close up the two strands running through $K_2$ (Figure \ref{fig: recursions}(c)) and are left with $k$ full twists on two strands.
\end{proof}

Now we consider the case $B(n,0)$. 

\begin{lemma}
    \label{lemma: B(n,0)}
    The braid $\sigma_3^{2k}B(n,0)$ is parity for all $n,k \geq 0$, and 
    $$\HHH\left[\sigma_3^{2k}B(n,0)\right] = t^{-2}\HHH\left[\sigma_3^{2k}C(n)\right] + qt^{-2}\HHH\left[\sigma_3^{2k+2}B(n-1,1)\right]$$
    $$= t^{-2}\HHH\left[\sigma_3^{2k}C(n)\right] + qt^{-4}\HHH \left[\sigma_3^{2k+2} C(n-1)\right] + q^2t^{-4}\HHH\left[\sigma_3^{2k+2} B(n-1,0)\right].$$

    In particular we have that $B(n,0)$ is parity for all $n \geq 0$.
\end{lemma}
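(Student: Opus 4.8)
The plan is to establish the two displayed identities in the statement and then read off parity by induction. Fix $k \ge 0$ and proceed by induction on $n$, with the cases of Lemma~\ref{lemma: twisted C(n)} (the braids $\sigma_3^{2k}C(n)$, known parity) and the preceding lemma (the braids $\sigma_3^{2k}B(0,0)$, known parity) available as inputs; the base case $n=0$ is then handled by the latter. For the inductive step, I would apply recursion \eqref{B recursion} \emph{locally} to $B(n,0)$, which is legitimate since (b) and (d) are homotopy equivalences, to obtain $B(n,0) = t^{-2}C(n) + qt^{-2}B(n,-1)$ --- except that negative exponents are not allowed, so the actual move is to write $B(n,0) = K_2\FT_4^n$ and feed a strand wrapping around $K_2$ through recursion (d), producing $t^{-2}C(n)$ plus $qt^{-2}$ times a copy of $B$ with one fewer full twist on strands $3$--$4$ but a compensating $\sigma_3^2$ picked up from rewriting $\FT_4$; tracking this carefully should yield the first displayed equality $\HHH[\sigma_3^{2k}B(n,0)] = t^{-2}\HHH[\sigma_3^{2k}C(n)] + qt^{-2}\HHH[\sigma_3^{2k+2}B(n-1,1)]$.

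The second displayed equality should come from applying recursion \eqref{B recursion} once more, now to the term $\sigma_3^{2k+2}B(n-1,1)$: peeling off the single $\FT_3$ gives $t^{-2}\sigma_3^{2k+2}C(n-1) + qt^{-2}\sigma_3^{2k+2}B(n-1,0)$, and substituting yields the three-term expression with coefficients $t^{-2}$, $qt^{-4}$, $q^2 t^{-4}$. Here the key point is that all the $\sigma_3$ twists commute past everything relevant --- $\sigma_3$ commutes with $\FT_4$ and, being an internal crossing, gets absorbed by $K_4$ via (b) --- so the twisted braids stay inside the family covered by Lemma~\ref{lemma: twisted C(n)} and by the inductive hypothesis applied with $k$ replaced by $k+1$. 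Once the three-term identity is in hand, each summand on the right is parity: $\sigma_3^{2k}C(n)$ and $\sigma_3^{2k+2}C(n-1)$ by Lemma~\ref{lemma: twisted C(n)}, and $\sigma_3^{2k+2}B(n-1,0)$ by the inductive hypothesis. Hence $\sigma_3^{2k}B(n,0)$ is parity, and specializing $k=0$ gives that $B(n,0)$ is parity.

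I expect the main obstacle to be purely diagrammatic bookkeeping: verifying that applying recursion (d) to $B(n,0)=K_2\FT_4^n$ genuinely produces the pair $\big(\sigma_3^{2k}C(n),\ \sigma_3^{2k+2}B(n-1,1)\big)$ with exactly the grading shifts $t^{-2}$ and $qt^{-2}$, rather than some other rearrangement of twists. Concretely one must check that $\FT_4^n = \JM_3^n \JM_4^n$ (using $\FT_4 = \JM_2\JM_3\JM_4$ together with absorption of $\JM_2$ into $K_2$), that the strand being grown wraps around $K_2$ correctly so that $K_2\JM_3 = t^{-2}K_3 + qt^{-2}K_2$ applies, and that the leftover crossing after removing one $\JM_3$ is precisely a $\sigma_3^2 = \JM_3$-type twist that commutes into the $\sigma_3^{2k+2}$ prefix. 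None of this requires new ideas beyond the relations in Figure~\ref{fig: recursions} and the commutation facts already recorded, but it is where an error would most easily creep in; the parity conclusion itself is then automatic from the Remark following Figure~\ref{fig: recursions}.
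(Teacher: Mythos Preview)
Your overall architecture matches the paper exactly: induct on $n$, peel one $\FT_4$ off $B(n,0)=K_2\FT_4^n$, apply Figure~\ref{fig: recursions}(d) to $K_2\JM_3$ to produce $t^{-2}C(n)$ plus a remainder, identify the remainder as $\sigma_3^{2k+2}B(n-1,1)$, then hit that with recursion~\eqref{B recursion} to get the three-term formula, and close the induction using Lemma~\ref{lemma: twisted C(n)} and the $\sigma_3^{2k}B(0,0)$ base case.

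The gap is in the identification step, which you flag as the ``main obstacle'' but then misdescribe. After applying (d), the second term is $qt^{-2}\,K_2\JM_4\FT_4^{n-1}$ (not $K_2$ with a stray $\sigma_3^2$ left over). To turn this into $\sigma_3^{2}B(n-1,1)$ the paper uses the identity $\JM_4=\sigma_3\JM_3\sigma_3$: one $\sigma_3$ commutes left past $K_2$, the other commutes right past $\FT_4^{n-1}$, giving $\sigma_3 K_2\JM_3\FT_4^{n-1}\sigma_3$, and then \emph{conjugation invariance}~\eqref{eq: conj} of $\HHH$ brings the trailing $\sigma_3$ around to the front, yielding $\sigma_3^{2}K_2\JM_3\FT_4^{n-1}=\sigma_3^{2}B(n-1,1)$. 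This is a non-local move, not a commutation; your phrase ``commutes into the $\sigma_3^{2k+2}$ prefix'' suggests you expect it to follow from sliding crossings alone, which it does not (as braids, $\sigma_3\JM_3\sigma_3\neq\sigma_3^2\JM_3$). The remark ``$\sigma_3^2=\JM_3$-type twist'' is also off: $\sigma_3^2$ is a twist on strands $3$--$4$, whereas $\JM_3=\sigma_2\sigma_1^2\sigma_2$ lives on the first three strands. Once you insert the $\JM_4=\sigma_3\JM_3\sigma_3$ rewrite and the conjugation step explicitly, the rest of your argument goes through verbatim.
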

\begin{proof}
    We have that locally,
    $$B(n,0) =  K_2 \JM_3 \JM_4 \FT_4^{n-1}$$
    $$= t^{-2}K_3\JM_4 \FT_4^{n-1} + qt^{-2}K_2\JM_4\FT_4^{n-1}$$
     \begin{equation}
    \label{eq: B(n,0) recursion 1}
    = t^{-2} C(n) + qt^{-2} K_2\JM_4\FT_4^{n-1}.
    \end{equation}
    Here we used Figure \ref{fig: recursions}(d) for $K_2\JM_3$, and the identity $K_3\JM_4=K_3\FT_4$ which follows from Figure \ref{fig: recursions}(b).
    Now note that $\JM_4 = \sigma_3 \JM_3 \sigma_3$, so
    $$\sigma_3^{2k}K_2 \JM_4 \FT_4^{n-1} = \sigma_3^{2k}K_2 \sigma_3 \JM_3 \sigma_3 \FT_4^{n-1} = \sigma_3^{2k}\sigma_3 K_2 \JM_3\FT_4^{n-1}\sigma_3,$$
    where we use the fact that $\sigma_3$ commutes with both $\FT_4$ and $K_2$.
    By conjugation invariance \eqref{eq: conj} we have that
    $$
    \HHH(\sigma_3^{2k}\sigma_3K_2 \JM_3 \FT_4^{n-1}\sigma_3)\simeq \HHH(\sigma_3^{2k+2}K_2 \JM_3 \FT_4^{n-1})= \HHH(\sigma_3^{2k+2} B(n-1,1)).
    $$
    Finally, a local application of recursion \eqref{B recursion} tells us that 
$$ \sigma_3^{2k+2} B(n-1,1) = t^{-2} \left[\sigma_3^{2k+2} C(n-1)\right] + qt^{-2}\left[\sigma_3^{2k+2} B(n-1,0)\right].$$ 

We know that $\sigma_3^{2k+2} C(n-1)$ is parity by Lemma \ref{lemma: twisted C(n)}, and we know that $\sigma_3^{2k+2n} B(0,0)$ is parity by Lemma \ref{lemma: twisted B(0,0)}. So by induction on $n$, we can conclude that $\sigma_3^{2k}B(n,0)$ is parity.
\end{proof}

\begin{cor}
\label{cor: B(n,m) parity}
    The braid $B(n,m)$ is parity for all $n,m \geq 0$.
\end{cor}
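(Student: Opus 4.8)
The plan is to derive Corollary~\ref{cor: B(n,m) parity} from the already-established base cases by a single application of recursion~\eqref{B recursion}. Recall that $B(n,m) = t^{-2} C(n) + q t^{-2} B(n, m-1)$, so if both $C(n)$ and $B(n,m-1)$ are parity, then so is $B(n,m)$, and the remark following Figure~\ref{fig: recursions} lets us pass to the level of rational functions. Thus the argument is a straightforward induction on $m$ with $n$ fixed arbitrary.

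First I would record the base case $m = 0$: the braid $B(n,0)$ is parity for all $n \geq 0$ by Lemma~\ref{lemma: B(n,0)} (taking $k = 0$ in the statement about $\sigma_3^{2k} B(n,0)$). I would also recall that $C(n)$ is parity for all $n \geq 0$ by Lemma~\ref{lemma: C(n)}. These two facts together with recursion~\eqref{B recursion} handle the inductive step: assuming $B(n, m-1)$ is parity, the right-hand side $t^{-2} C(n) + q t^{-2} B(n, m-1)$ is a sum of parity complexes, hence parity, and since the recursion is local (coming directly from Figure~\ref{fig: recursions}(d) and (b) with no closing-up or conjugation), it holds as a homotopy equivalence of complexes. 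Therefore $B(n,m)$ is parity, completing the induction.

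There is essentially no obstacle here: all the real work has already been done in Lemma~\ref{lemma: B(n,0)} (which itself packages the interaction with the twisted braids $\sigma_3^{2k} B(n-1,1)$ and the nested induction on $n$) and Lemma~\ref{lemma: C(n)}. The only thing to be careful about is to make sure the induction is set up so that $n$ is treated as an arbitrary fixed parameter while $m$ decreases — which is fine, since the $m = 0$ case is known for all $n$ simultaneously. One could equivalently unroll recursion~\eqref{B recursion} to write
$$
B(n,m) = t^{-2}\sum_{j=0}^{m-1} q^{j} t^{-2j}\, C(n) + q^{m} t^{-2m}\, B(n,0),
$$
exhibiting $B(n,m)$ directly as a sum of parity terms; but the one-line induction suffices. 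The proof is then complete.
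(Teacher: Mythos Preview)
Your proposal is correct and matches the paper's proof essentially line for line: induction on $m$ via recursion~\eqref{B recursion}, with the base case $B(n,0)$ supplied by Lemma~\ref{lemma: B(n,0)} and the parity of $C(n)$ by Lemma~\ref{lemma: C(n)}. The extra details you include (the $k=0$ specialization, the locality remark, and the unrolled sum) are fine elaborations but not needed beyond what the paper already says.
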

\begin{proof}
    We know that $B(n,0)$ is parity by Lemma \ref{lemma: B(n,0)} and that $C(n)$ is parity by Lemma \ref{lemma: C(n)}. So we use recursion \eqref{B recursion} and induction on $m$ to conclude that $B(n,m)$ is parity.
\end{proof}

Now we show that a twisted version $\sigma_2^2B(n,m)$ is also parity. We start with the step that requires the most caution to keep all braids parity.

\begin{lemma}
\label{lemma: twisted B(n,0)}
    If the right hand side is parity, then
    \begin{multline}
    \label{eq: twisted B(n,0)}
    \HHH\left[\sigma_2^2\sigma_3^{2k-2}B(n,0)\right] = t^{-2}\HHH\left[\sigma_3^{2k-2}C(n)\right] +\\ qt^{-4}\HHH \left[\widetilde{\JM}_3 \sigma_3^{2k-2} C(n-1) \right] + q^2t^{-4} \HHH\left[\sigma_2^2 \sigma_3^{2k} B(n-1,0)\right],
    \end{multline}
    where $\widetilde{\JM}_3=\sigma_3\sigma_2^2\sigma_3$ indicates the Jucys-Murphy element on the final 3 strands rather than the first 3 strands.
\end{lemma}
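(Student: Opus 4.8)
The plan is to mirror the proof of Lemma~\ref{lemma: B(n,0)}, carrying the extra twist $\sigma_2^2$ through the same recursion; the one genuinely new feature is that $\sigma_2^2$ shares a strand with $K_2$ and does not commute with $\sigma_3$, so it cannot be carried passively the way the $\sigma_3^{2k}$ twist was there. First I would write (for $n\ge1$) $B(n,0)=K_2\FT_4^n=K_2\JM_3\JM_4\FT_4^{n-1}$ via Figure~\ref{fig: recursions}(b) and apply Figure~\ref{fig: recursions}(d) locally to the $K_2\JM_3$; this splits $\sigma_2^2\sigma_3^{2k-2}B(n,0)$ into a $K_3$-term $t^{-2}\,\sigma_2^2\sigma_3^{2k-2}K_3\JM_4\FT_4^{n-1}$ and a $K_2$-term $qt^{-2}\,\sigma_2^2\sigma_3^{2k-2}K_2\JM_4\FT_4^{n-1}$.

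For the $K_3$-term, $K_3\JM_4=K_3\FT_4$ identifies it with $\sigma_2^2\sigma_3^{2k-2}C(n)$; then conjugation invariance~\eqref{eq: conj} lets me move $\sigma_2^2$ to the end of the braid word, slide it left past $\FT_4^n$ (with which it commutes), and absorb it into $K_3$ by Figure~\ref{fig: recursions}(b), leaving $t^{-2}\HHH[\sigma_3^{2k-2}C(n)]$ --- the first summand of~\eqref{eq: twisted B(n,0)}, which is parity by Lemma~\ref{lemma: twisted C(n)}.

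For the $K_2$-term, I would substitute $\JM_4=\sigma_3\JM_3\sigma_3$, commute one $\sigma_3$ leftward past $K_2$ and the other rightward past $\FT_4^{n-1}$, and then conjugate the trailing $\sigma_3$ to the front. The crucial point is that the resulting prefactor is $\sigma_3\sigma_2^2\sigma_3^{2k-1}=(\sigma_3\sigma_2^2\sigma_3)\,\sigma_3^{2k-2}=\widetilde{\JM}_3\sigma_3^{2k-2}$, so --- using also $K_2\JM_3\FT_4^{n-1}=B(n-1,1)$ --- this term becomes $qt^{-2}\HHH[\widetilde{\JM}_3\sigma_3^{2k-2}B(n-1,1)]$. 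A local application of recursion~\eqref{B recursion} splits it as $qt^{-4}\HHH[\widetilde{\JM}_3\sigma_3^{2k-2}C(n-1)]$ plus $q^2t^{-4}\HHH[\widetilde{\JM}_3\sigma_3^{2k-2}B(n-1,0)]$; for the last term I expand $\widetilde{\JM}_3=\sigma_3\sigma_2^2\sigma_3$, conjugate the leading $\sigma_3$ to the end, and slide it left past $\FT_4^{n-1}$ and $K_2$, obtaining $\sigma_2^2\sigma_3^{2k}B(n-1,0)$. Assembling the three pieces yields~\eqref{eq: twisted B(n,0)}.

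The point requiring care is parity: each of the two applications of Figure~\ref{fig: recursions}(d) may be replaced by addition of rational functions --- equivalently, the mapping cone by a direct sum --- only when both summands are parity. Granting the hypothesis that the right-hand side of~\eqref{eq: twisted B(n,0)} is parity, the summands of the inner recursion are $\widetilde{\JM}_3\sigma_3^{2k-2}C(n-1)$ and $\widetilde{\JM}_3\sigma_3^{2k-2}B(n-1,0)\sim\sigma_2^2\sigma_3^{2k}B(n-1,0)$, both parity, so their sum --- which up to a unit is the $K_2$-summand of the outer recursion --- is parity, while the $K_3$-summand of the outer recursion is $\sigma_3^{2k-2}C(n)$, parity by Lemma~\ref{lemma: twisted C(n)}; so both applications are legitimate. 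I expect the main obstacle to be exactly this handling of $\sigma_2^2$: the conjugation maneuver that recombines $\sigma_3\sigma_2^2\sigma_3$ into the Jucys--Murphy element $\widetilde{\JM}_3$ on the last three strands is the crux, and is what forces that element to appear in~\eqref{eq: twisted B(n,0)}; the rest is routine tracking of grading shifts.
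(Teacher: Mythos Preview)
Your proposal is correct and follows essentially the same argument as the paper's proof: split via the local recursion~\eqref{eq: B(n,0) recursion 1}, absorb $\sigma_2^2$ into $K_3$ for the first term, and for the second term rewrite $\JM_4=\sigma_3\JM_3\sigma_3$, shuffle the $\sigma_3$'s, and apply~\eqref{B recursion} once more. The only cosmetic difference is ordering: the paper applies the second instance of Figure~\ref{fig: recursions}(d) \emph{before} conjugating the trailing $\sigma_3$ around (so the $\widetilde{\JM}_3$ prefactor is assembled separately in each summand), whereas you conjugate first to obtain $\widetilde{\JM}_3\sigma_3^{2k-2}B(n-1,1)$ and then split --- which forces you to undo the $\widetilde{\JM}_3$ in the $B$-summand afterwards; either order works and your explicit parity bookkeeping is a welcome addition.
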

\begin{proof}
    %We have that locally,
    %$$B(n,0) = K_2\FT_4^n= K_2 \JM_3 \JM_4 %\FT_4^{n-1}$$
    %$$= t^{-2}K_3\JM_4 \FT_4^{n-1} + %qt^{-2}K_2\JM_4\FT_4^{n-1}$$
    %\begin{equation}
    %\label{eq: B(n,0) recursion 1}
    %= t^{-2} C(n) + qt^{-2} K_2\JM_4\FT_4^{n-1}.
    %\end{equation} 
    %Next, 
    Since the equation \eqref{eq: B(n,0) recursion 1} holds locally, we can multiply both sides on the left by $\sigma_2^2\sigma_3^{2k-2}$. For the first term in \eqref{eq: B(n,0) recursion 1}, if we include the additional crossings, note that 
    $$
    t^{-2}\left[\sigma_2^2 \sigma_3^{2k-2}C(n)\right]\sim t^{-2}\left[ \sigma_3^{2k-2}K_3\FT_4^n\sigma_2^2\right]=t^{-2}\left[ \sigma_3^{2k-2}K_3\sigma_2^2\FT_4^n\right]=
    t^{-2}\left[\sigma_3^{2k-2}C(n)\right].
    $$
    Here we conjugate $\sigma_2^2$ and pass it through $\FT_4^n$ until it is absorbed by $K_3$. Therefore
    $$
    t^{-2}\HHH\left[\sigma_2^2 \sigma_3^{2k-2}C(n)\right] = t^{-2}\HHH\left[\sigma_3^{2k-2}C(n)\right].$$
    %because we can 

    For the second term in \eqref{eq: B(n,0) recursion 1}, we can again write $\JM_4 = \sigma_3 \JM_3 \sigma_3$, so
    $$\sigma_2^2 \sigma_3^{2k-2}K_2 \JM_4 \FT_4^{n-1} = \sigma_2^2 \sigma_3^{2k-2}K_2 \sigma_3 \JM_3 \sigma_3 \FT_4^{n-1} = \sigma_2^2 \sigma_3^{2k-1} K_2 \JM_3\sigma_3 \FT_4^{n-1}.$$
    Applying recursion \eqref{B recursion} gives
    \begin{equation}
    \label{eq: B(n,0) recursion 2}
    %\sigma_2^2 \sigma_3^{2k-1} K_2 \JM_3\sigma_3 \FT_4^{n-1}= 
    t^{-2}\left[\sigma_2^2 \sigma_3^{2k-1} K_3 \sigma_3 \FT_4^{n-1}\right] + qt^{-2} \left[ \sigma_2^2 \sigma_3^{2k-1} K_2 \sigma_3\FT_4^{n-1}\right].
    \end{equation}
    
    Now we slide $\sigma_3$ through $\FT_4$ and conjugate by it in the first term in \eqref{eq: B(n,0) recursion 2}, and slide it past $K_2$ in the second term in \eqref{eq: B(n,0) recursion 2}, which gives
    $$t^{-2}\left[\sigma_3 \sigma_2^2 \sigma_3^{2k-1} K_3 \FT_4^{n-1}\right] + qt^{-2} \left[ \sigma_2^2 \sigma_3^{2k} K_2\FT_4^{n-1}\right]$$
    $$= t^{-2} \left[\widetilde{\JM}_3 \sigma_3^{2k-2} C(n-1) \right] + qt^{-2} \left[\sigma_2^2 \sigma_3^{2k} B(n-1,0)\right].$$
    %where $\widetilde{\JM}_3$ indicates the usual Jucys-Murphy element but on the final 3 strands rather than the first 3 strands.
\end{proof}
So in order to fully resolve $\sigma_2^2\sigma_3^{2k-2}B(n,0)$, we need still to resolve the cases $\widetilde{\JM}_3\sigma_3^{2k}C(n)$ and $\sigma_2^2 \sigma_3^{2k} B(0,0).$

\begin{lemma}
\label{lemma: JM C(n)}
    The braid $\widetilde{\JM}_3\sigma_3^{2k}C(n)$ is parity.
\end{lemma}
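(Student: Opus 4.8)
The plan is to induct on $n$, using the local recursion \eqref{C recursion}. For the inductive step I would apply \eqref{C recursion} inside the braid to write
$$\widetilde{\JM}_3\sigma_3^{2k}C(n) = t^{-3}\,\widetilde{\JM}_3\sigma_3^{2k}K_4 + qt^{-3}\,\widetilde{\JM}_3\sigma_3^{2k}C(n-1).$$
All crossings in $\widetilde{\JM}_3 = \sigma_3\sigma_2^2\sigma_3$ and in $\sigma_3^{2k}$ lie within the first four strands, so they are absorbed by $K_4$ (Figure \ref{fig: recursions}(b)), making the first term $t^{-3}K_4$, which is parity; the second term is parity by the inductive hypothesis. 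This reduces everything to the base case $n=0$, namely $\widetilde{\JM}_3\sigma_3^{2k}K_3$, where the one real idea is needed.

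For the base case the key observation is that strand $1$ carries no crossings, since everything in $\widetilde{\JM}_3$ and in $\sigma_3^{2k}$ involves only strands $2,3,4$. So I would first close it up using Figure \ref{fig: recursions}(c), which shrinks $K_3$ to $K_2$ on strands $2,3$ (I will write this as $\widetilde K_2$) at the cost of a factor $(t^2+a)$. Then, although $\widetilde{\JM}_3$ does not commute with $\sigma_3^{2k}$, conjugation invariance \eqref{eq: conj} lets me slide it to the far right, $\HHH(\widetilde{\JM}_3\sigma_3^{2k}\widetilde K_2) = \HHH(\sigma_3^{2k}\widetilde K_2\widetilde{\JM}_3)$, and now $\widetilde{\JM}_3$ is exactly the Jucys--Murphy braid wrapping the new strand $4$ around the projector $\widetilde K_2$ on strands $2,3$. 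Hence Figure \ref{fig: recursions}(d) applies locally and gives $\widetilde K_2\widetilde{\JM}_3 = t^{-2}\widetilde K_3 + qt^{-2}\widetilde K_2$, where $\widetilde K_3$ denotes $K_3$ on strands $2,3,4$.

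Finally I would dispatch the two resulting pieces. In $\sigma_3^{2k}\widetilde K_3$ the crossings $\sigma_3$ lie within strands $2,3,4$, hence are absorbed (Figure \ref{fig: recursions}(b)), leaving $\widetilde K_3$, which is parity with $\HHH(\widetilde K_3) = (t^2+a)(t+a)(1+a)$. In $\sigma_3^{2k}\widetilde K_2$ strand $2$ carries no crossings, so closing it up (Figure \ref{fig: recursions}(c)) leaves $(t+a)$ times a $K_1$ on strand $3$ together with $\sigma_3^{2k} = \FT_2^k$ on strands $3,4$; deleting the $K_1$ via Figure \ref{fig: recursions}(a) gives $\HHH(\sigma_3^{2k}\widetilde K_2) = (t+a)(1-q)\HHH(\FT_2^k)$, which is parity by \cite{Hog1}. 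Since both pieces are parity, so is $\widetilde{\JM}_3\sigma_3^{2k}K_3$, and assembling the computation yields $\HHH(\widetilde{\JM}_3\sigma_3^{2k}C(0)) = t^{-2}(t^2+a)(t+a)\big[(t^2+a)(1+a) + q(1-q)\HHH(\FT_2^k)\big]$; feeding this base case into \eqref{C recursion} then produces a closed formula for $\HHH(\widetilde{\JM}_3\sigma_3^{2k}C(n))$ in terms of $\HHH(K_4)$ and $\HHH(\FT_2^k)$.

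The step I expect to be the main obstacle is the base case — specifically, recognizing that once the free strand $1$ is closed off so that $K_3$ becomes a projector on strands $2,3$, the element $\widetilde{\JM}_3$ is precisely the wrap-around input of Figure \ref{fig: recursions}(d), and that a conjugation is required first to move $\widetilde{\JM}_3$ past the non-commuting $\sigma_3^{2k}$ into that position. Everything else is the routine bookkeeping already used throughout the paper: tracking which moves are local (Figures \ref{fig: recursions}(a),(b),(d)) versus only valid after closing up (Figure \ref{fig: recursions}(c) and conjugation), and propagating parity via the Remark following Figure \ref{fig: recursions}.
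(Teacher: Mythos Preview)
Your proof is correct and follows essentially the same route as the paper's: induct on $n$ via the local recursion \eqref{C recursion} (extra crossings absorbed by $K_4$), and for the base case $n=0$ close off the crossing-free first strand, conjugate $\widetilde{\JM}_3$ past $\sigma_3^{2k}$, apply Figure~\ref{fig: recursions}(d) to $K_2\JM_3$, and dispatch the two resulting pieces exactly as you describe. The only cosmetic difference is that the paper relabels to three strands after closing strand~1 (writing $\JM_3\sigma_2^{2k}K_2$) while you keep the original labels with tildes; your final formula agrees with the paper's after factoring out $(t+a)$.
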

\begin{proof}
    
    Consider $n=0$ first. We close up the first strand (apply Figure \ref{fig: recursions}(c)), so
    $$\HHH(\widetilde{\JM}_3\sigma_3^{2k}C(0)) = 
    \HHH(\widetilde{\JM}_3\sigma_3^{2k}K_3)=
    (t^2+a)\HHH(\JM_3\sigma_2^{2k}K_2).$$
    Note that the first two braids here are on 4 strands, while the last is on 3 strands after the first strand was closed up.
    Now conjugate by $\JM_3$ and apply Figure \ref{fig: recursions}(d) to get 
    $$\JM_3\sigma_2^{2k}K_2 \sim \sigma_2^{2k}K_2\JM_3 = t^{-2}\left[\sigma_2^{2k}K_3\right] + qt^{-2}\left[\sigma_2^{2k}K_2 \right].$$
    The $K_3$ in the first term absorbs the extra crossings (Figure \ref{fig: recursions}(b)), and we can close up the first strand of the second term (Figure \ref{fig: recursions}(d)). So
    $$t^{-2}\left[\sigma_2^{2k}K_3\right] + qt^{-2}\left[\sigma_2^{2k}K_2 \right] = t^{-2}\left[K_3\right] + qt^{-2}(t+a)\left[K_1 \FT_2^k \right],$$
    where the final term is now on two strands. So we conclude that $\widetilde{\JM}_3\sigma_3^{2k}C(0)$ is parity, and
    
    $$\HHH(\widetilde{\JM}_3\sigma_3^{2k}C(0)) = t^{-2}(t^2+a)\left[(t^2+a)(t+a)(1+a) + q(t+a)(1-q)\HHH(\FT_2^k)\right].$$
    
    To resolve $\widetilde{\JM}_3\sigma_3^{2k}C(n)$, again we apply recursion \eqref{C recursion}. All extra crossings will be absorbed by $K_4$, so by induction on $n$ and the above base case, $\widetilde{\JM}_3\sigma_3^{2k}C(n)$ is parity.
\end{proof}

\begin{lemma}
    \label{lemma: twisted B(0,0)}
        The braid $\sigma_2^2 \sigma_3^{2k}B(0,0)$ is parity, and
        $$\HHH(\sigma_2^2\sigma_3^{2k} B(0,0)) = (t+a)t^{-1}\left[(t+a)(1-q)+q(1+a)\right]\HHH(\FT_2^{k}).$$
\end{lemma}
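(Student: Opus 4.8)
The plan is to mimic the computation of $\HHH(\sigma_3^{2k}B(0,0))$ and of $\widetilde{\JM}_3\sigma_3^{2k}C(0)$, but now with an extra $\sigma_2^2$ crossing in the way that prevents us from immediately closing up. Recall that $B(0,0) = K_2$ lives on the first two strands, so $\sigma_2^2\sigma_3^{2k}B(0,0) = \sigma_2^2\sigma_3^{2k}K_2$, a braid-like diagram on $4$ strands. First I would close up the fourth strand: there are no crossings on strand $4$ except those coming from $\sigma_3^{2k}$, but strand $4$ participates in $\sigma_3$, so instead I close up the \emph{first} strand, which is genuinely free (it only meets $K_2$ and $\sigma_2$ does not touch it — wait, $\sigma_2$ acts on strands $2,3$, so strand $1$ is free, and $K_2$ spans strands $1,2$). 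Using Figure \ref{fig: recursions}(c) to close the strand through $K_2$ that is otherwise unobstructed, I reduce to a diagram on fewer strands with a factor $(t+a)$ (the $n=1$ case of the closing-up weight $(t^n+a)$). Concretely I expect
$$\HHH(\sigma_2^2\sigma_3^{2k}K_2) = (t+a)\,\HHH(\sigma_2^{2k}\sigma_1^2 K_1) \quad\text{or a similar reduction,}$$
after relabelling strands so that $K_2$ becomes $K_1$ and $\sigma_3\mapsto \sigma_2$, $\sigma_2\mapsto\sigma_1$. I would need to track carefully which strand closes up and what the residual diagram is.

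Next, on the reduced $3$-strand (then $2$-strand) diagram, I would use the local recursion \eqref{A recursion}/Figure \ref{fig: recursions}(d) together with conjugation invariance \eqref{eq: conj} exactly as in Lemma \ref{lemma: JM C(n)}: conjugate to move the extra crossing $\sigma_1^2$ (the image of the original $\sigma_2^2$) around, absorb it into the appropriate $K$ via Figure \ref{fig: recursions}(b) when possible, and use Figure \ref{fig: recursions}(c) to peel off one more strand, which contributes another $(t+a)$ or $(1+a)$ factor. After resolving the $K_1$ with Figure \ref{fig: recursions}(a) (contributing $1/(1-q)$) I should be left with full twists $\FT_2^k$ on two strands, whose homology is an allowed black box. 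Bookkeeping the two branches of the recursion should produce the two terms $(t+a)(1-q)$ and $q(1+a)$ inside the bracket, with the overall prefactor $(t+a)t^{-1}$ coming from the $t^{-2}(t+a)$-type shifts of Figure \ref{fig: recursions}(d) combined with the closing-up weights; the $(1-q)$ factors should cancel against the $1/(1-q)$ from Figure \ref{fig: recursions}(a), matching the stated formula.

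Parity is automatic from this procedure: each diagram appearing along the way is either a known-parity building block ($K_1$, $K_2$, $\FT_2^k$, or a closed-up version thereof, with only even shifts applied) or is obtained from such by Figure \ref{fig: recursions}(d), so the Remark after Figure \ref{fig: recursions} gives parity of $\sigma_2^2\sigma_3^{2k}B(0,0)$, and the recursion then computes $\HHH$ as a genuine sum of rational functions rather than a mapping cone.

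The main obstacle I anticipate is the initial bookkeeping of \emph{which} strand to close up and in what order: the crossing $\sigma_2^2$ is positioned so that, unlike in the untwisted lemma, one cannot simply close the two strands of $K_2$ and be done — one of those strands carries the $\sigma_2$ crossing. Getting the relabelling right (so that $\widetilde{\JM}_3$-style conjugation tricks apply cleanly) and confirming that exactly the right weights $(t^n+a)$ appear at each closure is the delicate part; the rest is the same ``conjugate, absorb, close up, resolve $K_1$'' routine used in Lemmas \ref{lemma: twisted C(n)}, \ref{lemma: B(n,0)}, and \ref{lemma: JM C(n)}.
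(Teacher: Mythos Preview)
Your proposal is correct and follows essentially the same route as the paper: close up strand~1 through $K_2$ for a factor $(t+a)$, relabel to obtain $\sigma_1^{2}\sigma_2^{2k}K_1$ on three strands, conjugate $\sigma_1^{2}=\JM_2$ to the right of $K_1$, apply Figure~\ref{fig: recursions}(d) to split into $t^{-1}\sigma_2^{2k}K_2+qt^{-1}\sigma_2^{2k}K_1$, and then close up the first strand of each term. One small correction to your bookkeeping: removing the residual $K_1$ via Figure~\ref{fig: recursions}(a) contributes a factor of $(1-q)$, not $1/(1-q)$, and this $(1-q)$ survives in the stated formula rather than cancelling.
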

\begin{proof}
    By definition, $\sigma_2^2 \sigma_3^{2k}B(0,0)=\sigma_2^2 \sigma_3^{2k}K_2$.
    First, we close up one strand passing through $K_2$ on the left to obtain
    $$
    (t+a)\sigma_1^2 \sigma_2^{2k}K_1\sim (t+a) \sigma_2^{2k}K_1\sigma_1^2=(t+a) \sigma_2^{2k}K_1\JM_2.
    $$
    on three strands. Then we use Figure \ref{fig: recursions}(d) to get 
    $$
    \sigma_2^{2k}K_1\JM_2=t^{-1}[\sigma_2^{2k}K_2]+qt^{-1}[\sigma_2^{2k}K_1].
    $$
    For both terms, we can close up the first strand and have $k$ full twists remaining on the last two strands, getting
    $$t^{-1}\left[(t+a)(1-q)\FT_2^k + q(1+a)\FT_2^k\right].$$

\end{proof}

\begin{lemma}
\label{lemma: twisted B}
    The braid $\sigma_2^2B(n,m)$ is parity.
\end{lemma}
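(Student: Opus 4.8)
The plan is a nested induction: first reduce $\sigma_2^2B(n,m)$ to the case $m=0$ by induction on $m$, and then settle the base case $\sigma_2^2B(n,0)$ by induction on $n$ inside the slightly larger family $\sigma_2^2\sigma_3^{2k}B(n,0)$. For the first step, apply recursion \eqref{B recursion} locally to $\sigma_2^2B(n,m) = \sigma_2^2K_2\FT_3^m\FT_4^n$, which gives
$$\sigma_2^2B(n,m) = t^{-2}\,\sigma_2^2C(n) + qt^{-2}\,\sigma_2^2B(n,m-1).$$
Here $\sigma_2$ is a crossing among the first three strands, so it is absorbed by the $K_3$ in $C(n) = K_3\FT_4^n$ (Figure \ref{fig: recursions}(b)); thus $\sigma_2^2C(n) = C(n)$, which is parity by Lemma \ref{lemma: C(n)}. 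Hence, provided $\sigma_2^2B(n,0)$ is parity, induction on $m$ shows $\sigma_2^2B(n,m)$ is parity for all $m\ge 0$.

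It remains to prove that $\sigma_2^2\sigma_3^{2k}B(n,0)$ is parity for all $k,n\ge 0$; the desired base case is then $k=0$. I would induct on $n$. The case $n=0$ is exactly Lemma \ref{lemma: twisted B(0,0)}. For $n\ge 1$, Lemma \ref{lemma: twisted B(n,0)} (with its index shifted so that the $\sigma_3$-twist on the left is $\sigma_3^{2k}$) expresses $\HHH[\sigma_2^2\sigma_3^{2k}B(n,0)]$ as a $q,t$-monomial combination of $\HHH[\sigma_3^{2k}C(n)]$, $\HHH[\widetilde{\JM}_3\sigma_3^{2k}C(n-1)]$, and $\HHH[\sigma_2^2\sigma_3^{2k+2}B(n-1,0)]$. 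The first term is parity by Lemma \ref{lemma: twisted C(n)}, the second by Lemma \ref{lemma: JM C(n)}, and the third by the inductive hypothesis (note that the exponent of $\sigma_3$ has grown, which is precisely why the family must be treated uniformly in $k$). Since the whole right-hand side is parity, the hypothesis of Lemma \ref{lemma: twisted B(n,0)} is met, the identity holds, and the combination of parity classes is again parity; this closes the induction on $n$ and hence the proof. As a byproduct, chaining the recursions gives a recursive formula for $\HHH(\sigma_2^2B(n,m))$, though only parity is needed here.

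The genuine difficulty is not in this final assembly but in the fact — already handled by the preceding lemmas — that $\sigma_2^2$ cannot simply be conjugated off of $B(n,m)$: when recursion \eqref{B recursion} is unfolded, the identity $\JM_4 = \sigma_3\JM_3\sigma_3$ makes the $\sigma_2^2$ collide with newly produced $\sigma_3$ crossings, forcing the auxiliary families $\sigma_3^{2k}C(n)$, $\widetilde{\JM}_3\sigma_3^{2k}C(n)$, and $\sigma_2^2\sigma_3^{2k}B(0,0)$ into the picture and making the $\sigma_3$-exponent drift along the recursion. With Lemmas \ref{lemma: twisted C(n)}, \ref{lemma: JM C(n)}, \ref{lemma: twisted B(n,0)}, and \ref{lemma: twisted B(0,0)} in hand, the remaining task — and the only point requiring care — is to order the inductions so that the conditional hypothesis ``the right-hand side is parity'' in Lemma \ref{lemma: twisted B(n,0)} is verified before that lemma is invoked, which the inductions on $m$ and then $n$ above arrange.
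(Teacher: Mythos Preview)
Your proof is correct and follows essentially the same approach as the paper: an induction on $m$ via recursion \eqref{B recursion} (with $\sigma_2^2$ absorbed by $K_3$ in the $C$-term) together with an induction on $n$ through the family $\sigma_2^2\sigma_3^{2k}B(n,0)$ using Lemma~\ref{lemma: twisted B(n,0)} and the parity results of Lemmas~\ref{lemma: twisted C(n)}, \ref{lemma: JM C(n)}, and \ref{lemma: twisted B(0,0)}. The only difference is that you run the two inductions in the opposite order and spell out the index shift and the verification of the conditional hypothesis in Lemma~\ref{lemma: twisted B(n,0)} more carefully than the paper does.
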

\begin{proof}
    We can see that $\sigma_2^2\sigma_3^{2k}B(n,0)$ is parity by Lemma \ref{lemma: twisted B(n,0)}, as all terms on the right hand side of \eqref{eq: twisted B(n,0)} are parity by Lemma \ref{lemma: JM C(n)} and Lemma \ref{lemma: twisted B(n,0)}.

    It follows that $\sigma_2^2B(n,m)$ is parity from induction on recursion \eqref{B recursion} and the above, as once again the extra crossings are absorbed by $K_3$.
\end{proof}

    % For $\sigma_3^2C(n)$, we can resolve the $C$ until we get to $\sigma_3^2 C(0)$, then close up the $K_3$ and resolve the $FT$ on the remaining two strands. So $\sigma_3^2 C(n)$ obeys the same recursion \ref{C recursion}, and

    % $$\sigma_3^2C(0) = (t^2+a)(t+a)\FT_2 = (t^2+a)(t+a)(1+a)t^{-1}\left[(t+a) + q(1+a)\right] = C(0) \cdot \left[q/t + \frac{t+a}{t(1+a)} \right]$$

    % For $\sigma_3^2 B(n_1-1,0)$, continue resolving the $B$ until we get $\sigma_3^{2n_1}B(0,0)$ and $\sigma_3^{2i}C(n_1-i)$. These can similarly be resolved by applying recursions if necessary, then closing up and then resolving the full twist, with

    % $$\sigma_3^{2i}C(0) = (t^2+a)(t+a)\FT_2^i = C(0)\frac{\FT_2^i}{(1+a)^2},$$

    % and

    % $$\sigma_3^{2n_1} B(0,0) = \frac{(t+a)(1+a)}{1-q} \FT_2^{n_1}.$$

\subsection{Recursions for $A(n,m,l)$}
\label{sec: recursions for A}
    Now we know how to fully resolve all of the $B$'s and $C$'s, so the only thing left is $A$.

    \begin{lemma}
    \label{lemma: A(n,m,0)}
    If the right hand side is parity, then
        $$A(n,m,0) = t^{-1}B(n,m) + qt^{-1}\left[\sigma_2^2 A(n,m-1,1)\right]$$
        $$= t^{-1}B(n,m) + qt^{-2}\left[\sigma_2^2 B(n,m-1)\right] + q^2t^{-3}B(n,m-1) + q^3t^{-3}A(n,m-1,0).$$
    \end{lemma}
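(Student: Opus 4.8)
The plan is to imitate the derivation of recursion~\eqref{B recursion} and of Lemma~\ref{lemma: B(n,0)}. Since the local recursion~\eqref{A recursion} only applies for $l\ge 1$, for $l=0$ I first peel a Jucys--Murphy factor off the leading full twist. Write $A(n,m,0)=K_1\FT_3^{m}\FT_4^{n}=K_1\JM_2\JM_3\FT_3^{m-1}\FT_4^{n}$ using $\FT_3=\JM_2\JM_3$, and apply Figure~\ref{fig: recursions}(d) locally in the form $K_1\JM_2=t^{-1}K_2+qt^{-1}K_1$. In the first resulting term, $t^{-1}K_2\JM_3\FT_3^{m-1}\FT_4^{n}=t^{-1}K_2\FT_3^{m}\FT_4^{n}=t^{-1}B(n,m)$, using the identity $K_2\JM_3=K_2\FT_3$ which follows from Figure~\ref{fig: recursions}(b). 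In the second term, write $\JM_3=\sigma_2\JM_2\sigma_2$; since $\sigma_2$ acts on strands disjoint from the one carrying $K_1$ it commutes with $K_1$, and it commutes with $\FT_3$ and $\FT_4$ as well, so conjugation invariance~\eqref{eq: conj} gives $\HHH(K_1\JM_3\FT_3^{m-1}\FT_4^{n})=\HHH(\sigma_2^2 K_1\JM_2\FT_3^{m-1}\FT_4^{n})=\HHH(\sigma_2^2 A(n,m-1,1))$, since $K_1\JM_2\FT_3^{m-1}\FT_4^{n}=K_1\FT_2\FT_3^{m-1}\FT_4^{n}=A(n,m-1,1)$. This is the first equality.

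For the second equality, multiply the local recursion~\eqref{A recursion} with parameters $(n,m-1,1)$ on the left by $\sigma_2^2$, which is legitimate because~\eqref{A recursion} holds at the level of complexes: $\sigma_2^2 A(n,m-1,1)=t^{-1}\sigma_2^2 B(n,m-1)+qt^{-1}\sigma_2^2 A(n,m-1,0)$. Substituting this into the first equality and collecting powers of $q$ and $t$, the claimed right-hand side follows once we establish
$$\HHH(\sigma_2^2 A(n,m-1,0))=t^{-1}\HHH(B(n,m-1))+qt^{-1}\HHH(A(n,m-1,0)).$$

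To prove this, recall $A(n,m-1,0)=K_1\FT_3^{m-1}\FT_4^{n}$, so $\sigma_2^2 A(n,m-1,0)=K_1\sigma_2^2\FT_3^{m-1}\FT_4^{n}$, using that $\sigma_2^2$ is disjoint from the strand of $K_1$. By Figure~\ref{fig: recursions}(a) the $K_1$ may be slid onto the second strand without changing the homotopy type, so $\sigma_2^2 A(n,m-1,0)\simeq K_1^{(2)}\sigma_2^2\FT_3^{m-1}\FT_4^{n}$, and now $\sigma_2^2$ is precisely the third strand wrapping around $K_1^{(2)}$. Since Figure~\ref{fig: recursions}(d) is local, its relabeled form gives $K_1^{(2)}\sigma_2^2=t^{-1}K_2^{(2,3)}+qt^{-1}K_1^{(2)}$, where $K_2^{(2,3)}$ is the length-two projector on strands $2,3$. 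The $qt^{-1}$-term is $qt^{-1}K_1^{(2)}\FT_3^{m-1}\FT_4^{n}\simeq qt^{-1}A(n,m-1,0)$ by Figure~\ref{fig: recursions}(a) again. For the $t^{-1}$-term, conjugate by a positive permutation braid $g$ on the first three strands whose underlying permutation sends $\{2,3\}$ to $\{1,2\}$ (e.g.\ a lift of $(1\,3)$): then $g$ lies in the braid group on the first three strands, so it commutes with the central full twists $\FT_3$ and $\FT_4$, while it conjugates $K_2^{(2,3)}$ to $K_2^{(1,2)}$; hence~\eqref{eq: conj} gives $\HHH(K_2^{(2,3)}\FT_3^{m-1}\FT_4^{n})=\HHH(K_2^{(1,2)}\FT_3^{m-1}\FT_4^{n})=\HHH(B(n,m-1))$. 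This proves the identity, and hence the lemma.

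As in all the earlier lemmas, the additive form of Figure~\ref{fig: recursions}(d) used above is valid only once every diagram in sight is parity: the parity of $B(n,m)$ and $B(n,m-1)$ comes from Corollary~\ref{cor: B(n,m) parity}, that of $\sigma_2^2 B(n,m-1)$ from Lemma~\ref{lemma: twisted B}, and the parity of $A(n,m-1,0)$ (hence of $\sigma_2^2 A(n,m-1,0)$ and of $K_1^{(2)}\FT_3^{m-1}\FT_4^{n}$) is exactly the standing hypothesis. I expect the main obstacle to be the identity $\HHH(\sigma_2^2 A(n,m-1,0))=t^{-1}\HHH(B(n,m-1))+qt^{-1}\HHH(A(n,m-1,0))$: one must check carefully that Figure~\ref{fig: recursions}(d) really applies after the $K_1$ has been moved off the first strand, and that the $K_2$ it produces on strands $\{2,3\}$ can be carried back to the standard strands $\{1,2\}$ by a conjugation that simultaneously fixes both full twists. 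Everything else is bookkeeping with the local moves and conjugation invariance.
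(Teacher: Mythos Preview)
Your derivation of the first displayed equality coincides with the paper's: peel $\JM_2\JM_3$ off $\FT_3$, apply Figure~\ref{fig: recursions}(d) to $K_1\JM_2$, and conjugate the stray $\sigma_2$ around. After applying~\eqref{A recursion} to $\sigma_2^2A(n,m-1,1)$ you are also in step with the paper. The difference is entirely in how you treat $\sigma_2^2A(n,m-1,0)$.

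The paper never moves any $K_n$ off the first block of strands. Instead it uses the braid identity $\sigma_2=(\sigma_1\sigma_2)\sigma_1(\sigma_1\sigma_2)^{-1}$: since $\sigma_1\sigma_2$ commutes with $K_1$, $\FT_3$, and $\FT_4$, conjugation by $\sigma_1\sigma_2$ gives
\[
\sigma_2^2A(n,m-1,0)\ \sim\ K_1\sigma_1^2\FT_3^{m-1}\FT_4^{n}=A(n,m-1,1),
\]
and one more application of~\eqref{A recursion} finishes. Everything here is pure braid-word manipulation plus conjugation invariance~\eqref{eq: conj}; no property of $K_2$ beyond Figure~\ref{fig: recursions}(b) is used.

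Your route instead relocates $K_1$ to strand~$2$, applies Figure~\ref{fig: recursions}(d) there, and then needs to carry $K_2^{(2,3)}$ back to $K_2^{(1,2)}$ by conjugation. That last step is exactly the obstacle you flag, and it is a genuine gap relative to the tools the paper allows itself. For any $g$ in the $3$-strand braid group with underlying permutation sending $\{2,3\}$ to $\{1,2\}$, after absorbing the available $\sigma_2$'s the claim $gK_2^{(2,3)}g^{-1}\simeq K_2^{(1,2)}$ reduces to a sliding relation of the shape $\sigma_1K_2^{(2,3)}\simeq K_2^{(1,2)}\sigma_1$. This is true for the Elias--Hogancamp complexes, but it is not among the properties in Figure~\ref{fig: recursions}, and Figure~\ref{fig: recursions}(b) only lets $K_2$ absorb crossings \emph{inside} its own two strands. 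So your argument, as written, imports an extra fact about $K_n$ that the paper deliberately avoids. The fix is exactly the paper's trick: conjugate $\sigma_2^2$ to $\sigma_1^2$ \emph{before} invoking Figure~\ref{fig: recursions}(d), so that the projector never leaves strands $\{1,2\}$.
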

    \begin{proof}
    First, use Figure \ref{fig: recursions}(d) to get that
    $$A(n,m,0) = K_1 \FT_3^m \FT_4^n = K_1 \JM_2 \JM_3 \FT_3^{m-1} \FT_4^n$$
    \begin{equation}
    \label{eq: recursion A 1}
    = t^{-1}K_2 \JM_3\FT_3^{m-1} \FT_4^n + qt^{-1}K_1 \JM_3 \FT_3^{m-1} \FT_4^n.
    \end{equation}
    Since $K_2\JM_3 = K_2 \FT_3$, the first term in \eqref{eq: recursion A 1} is just $t^{-1} B(n,m)$. For the second term in \eqref{eq: recursion A 1}, note that $\JM_3 = \sigma_2 \JM_2 \sigma_2$ and that $\sigma_2$ commutes with $\FT_3$, $\FT_4$, and $K_1$. Sliding one $\sigma_2$ to the top and conjugating by it and moving the other $\sigma_2$ down past $K_1$, we get that
    $$K_1 \JM_3 \FT_3^{m-1} \FT_4^n = K_1 \sigma_2 \JM_2 \sigma_2 \FT_3^{m-1} \FT_4^n=\sigma_2K_1 \JM_2  \FT_3^{m-1} \FT_4^n\sigma_2$$
    $$\sim\sigma_2^2 K_1 \JM_2 \FT_3^{m-1}\FT_4^n = \sigma_2^2 A(m,n-1,1).$$
    Overall we have that
    $$\HHH[A(n,m,0)] = t^{-1}\HHH(B(n,m)) + qt^{-1}\HHH\left[\sigma_2^2 A(n,m-1,1)\right].$$
    If we apply recursion \eqref{A recursion} again, we get that
    $$\sigma_2^2 A(n,m-1,1) = t^{-1}\left[\sigma_2^2 B(n, m-1)\right] + qt^{-1} \sigma_2^2 A(n,m-1,0)$$
    $$= t^{-1}\left[\sigma_2^2 B(n, m-1)\right] + qt^{-1} A(n,m-1,1),$$
    noticing that 
        $$\sigma_2^2 A(n,m-1,0) =
        \sigma_2^2K_1\FT_3^{m-1}\FT_4^{n}=(\sigma_1\sigma_2)K_1\sigma_1^2\FT_3^{m-1}\FT_4^{n}(\sigma_1\sigma_2)^{-1}
        $$
        $$\sim K_1\sigma_1^2\FT_3^{m-1}\FT_4^{n} 
        = A(n,m-1, 1).$$
     Here we used that $\sigma_2 =(\sigma_1\sigma_2)\sigma_1(\sigma_1\sigma_2)^{-1}$ and the conjugating braid $(\sigma_1\sigma_2)$ commutes with $K_1,\FT_3$ and $\FT_4$.  
    Finally, we apply recursion \eqref{A recursion} one more time to $A(n,m-1,1)$ to get the desired result.
    \end{proof}
   We have already shown that most of the terms on the right hand side of the equation in Lemma \ref{lemma: A(n,m,0)} are parity. For the final base case, we refer to the work of Hogancamp in \cite{Hog1}, noting that the torus link T(4,4n) is the closure of the braid $\FT_4^n$.
    \begin{lemma} (Hogancamp \cite{Hog1})
    \label{lemma: A(n,0,0)}
        The braid $A(n,0,0) = (1-q)\FT_4^n$ is parity and $\HHH(A(n,0,0))$ can be computed recursively.
    \end{lemma}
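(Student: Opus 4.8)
The plan is to reduce the statement to the torus-link computation of \cite{Hog1} in one step. First I would unwind the definition: since $A(n,m,l) = K_1\FT_2^{l}\FT_3^{m}\FT_4^{n}$, setting $m=l=0$ gives $A(n,0,0) = K_1\FT_4^{n}$, where $K_1$ sits on (say) the first of the four strands. Now Figure \ref{fig: recursions}(a) says exactly that $K_1$ is homotopy equivalent to $(1-q)$ copies of the identity, and — as noted at the end of the section preamble — this equivalence is local. Tensoring with $\FT_4^{n}$ on the remaining strands, the complex $K_1\FT_4^{n}$ is homotopy equivalent to $(1-q)\FT_4^{n}$, which is the first assertion of the lemma; in particular $\HHH(A(n,0,0)) = (1-q)\,\HHH(\FT_4^{n})$.

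Next I would invoke the input we have allowed ourselves. The braid closure of $\FT_4^{n}$ is the torus link $T(4,4n)$, and by Hogancamp \cite{Hog1} the homology $\HHH(\FT_4^{n}) = \HHH(T(4,4n))$ is parity and is computed by the (finite) recursion developed in \cite{Hog1} — a recursion entirely separate from the $A$, $B$, $C$ recursions \eqref{A recursion}, \eqref{B recursion}, \eqref{C recursion} of this section. Since the factor $(1-q)$ carries $T$-degree $0$, multiplying by it preserves parity. Hence $A(n,0,0)$ is parity, and $\HHH(A(n,0,0))$ is obtained by multiplying the output of Hogancamp's recursion by $(1-q)$. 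This is the one base case we do not re-derive, precisely as advertised in the introduction.

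I do not expect a genuine obstacle here. The two points that merit a word of care are: (i) Figure \ref{fig: recursions}(a) must be used as a \emph{local} homotopy equivalence, so that deleting the $K_1$ in the presence of the full twists $\FT_4^{n}$ is legitimate at the level of complexes and not only of $\HHH$; and (ii) one should note that the recursion imported from \cite{Hog1} terminates on its own and returns a rational function in $q,t,a$, which is automatic once parity of $\HHH(\FT_4^{n})$ is known. With these remarks in place the proof is a one-line application of Figure \ref{fig: recursions}(a) together with the cited result.
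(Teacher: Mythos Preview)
Your proposal is correct and matches the paper's approach: the paper does not give a proof of this lemma at all, simply citing \cite{Hog1} and recording the identification $A(n,0,0)=(1-q)\FT_4^{n}$ in the statement. You have correctly spelled out the one extra step, namely using Figure~\ref{fig: recursions}(a) (which the paper explicitly says is local) to pass from $K_1\FT_4^{n}$ to $(1-q)\FT_4^{n}$, and then invoking Hogancamp's parity result for $\HHH(\FT_4^{n})=\HHH(T(4,4n))$; the observation that $(1-q)$ has $T$-degree $0$ and hence preserves parity is also correct.
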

    \begin{cor}
    \label{cor: A(n,m,0) parity}
        The braid $A(n,m,0)$ is parity for all $n,m \geq 0$.
    \end{cor}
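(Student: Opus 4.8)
The plan is to run an induction on $m$ with $n$ held fixed, using the four-term expansion of Lemma~\ref{lemma: A(n,m,0)} and feeding it parity inputs that are all already available. For the base case $m=0$, the braid $A(n,0,0)=(1-q)\FT_4^n$ is parity by Lemma~\ref{lemma: A(n,0,0)}, i.e.\ by Hogancamp's computation of $\HHH(T(4,4n))$ in \cite{Hog1}; no separate argument is needed there.

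For the inductive step, fix $m\geq 1$ and assume $A(n,m-1,0)$ is parity. I would then inspect the right-hand side of the identity of Lemma~\ref{lemma: A(n,m,0)},
$$A(n,m,0) = t^{-1}B(n,m) + qt^{-2}\left[\sigma_2^2 B(n,m-1)\right] + q^2t^{-3}B(n,m-1) + q^3t^{-3}A(n,m-1,0),$$
and verify that each of the four summands is parity: $B(n,m)$ and $B(n,m-1)$ are parity by Corollary~\ref{cor: B(n,m) parity}, the twisted term $\sigma_2^2 B(n,m-1)$ is parity by Lemma~\ref{lemma: twisted B}, and $A(n,m-1,0)$ is parity by the inductive hypothesis. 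Since the hypothesis of Lemma~\ref{lemma: A(n,m,0)} (that the right-hand side be parity) is thereby satisfied, the lemma yields that $A(n,m,0)$ is parity, completing the induction. Note that the recursion strictly decreases $m$ — the terms $B(n,m-1)$, $\sigma_2^2B(n,m-1)$, and $A(n,m-1,0)$ all have second index $m-1$, while $B(n,m)$ is parity outright — so the argument is well-founded and bottoms out at the Hogancamp base case.

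I do not expect a genuine obstacle at this stage: all the delicate work has already been absorbed into the supporting lemmas, especially Lemma~\ref{lemma: twisted B}, which guarantees that the twisted braid $\sigma_2^2B(n,m)$ stays parity through its own recursion, and which in turn rests on Lemmas~\ref{lemma: twisted B(n,0)}, \ref{lemma: JM C(n)}, and \ref{lemma: twisted B(0,0)}. The only thing to watch is the bookkeeping of grading shifts and indices when chaining Lemma~\ref{lemma: A(n,m,0)} with Corollary~\ref{cor: B(n,m) parity} and Lemma~\ref{lemma: twisted B}. As a byproduct, the same recursion — together with recursions \eqref{A recursion}, \eqref{B recursion}, \eqref{C recursion} and the explicit base-case formulas recorded in the earlier lemmas — computes $\HHH(A(n,m,0))$, and hence $\HHH$ of the Coxeter braids $\beta(d_1,d_2,d_3,d_4)$, as an explicit rational function in $q$ and $t$.
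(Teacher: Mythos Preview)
Your argument is correct and matches the paper's proof essentially verbatim: induction on $m$ via the four-term expansion of Lemma~\ref{lemma: A(n,m,0)}, with the $B$-terms handled by Corollary~\ref{cor: B(n,m) parity} and Lemma~\ref{lemma: twisted B} and the base case $A(n,0,0)$ by Lemma~\ref{lemma: A(n,0,0)}. The only difference is that you spell out the induction more explicitly than the paper does.
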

    \begin{proof}
        This follows by induction from Lemma \ref{lemma: A(n,m,0)}, as we have shown that every $B$ term on the right hand side is parity (Lemma \ref{cor: B(n,m) parity} and Lemma \ref{lemma: twisted B}) and that the base case is parity (Lemma \ref{lemma: A(n,0,0)}).
    \end{proof}
    \begin{thm}
    \label{thm: beta parity}
        Assume that $0\leq d_1\leq d_2\leq d_3$. Then the braid 
        $$\beta = \beta(d_1,d_2,d_3,d_4) = (\FT_2)^{d_3-d_2} \cdot (\FT_3)^{d_2-d_1} \cdot (\FT_4)^{d_1}$$
        is parity, and $\HHH(\beta)$ can be computed using the recursive process laid out above.
    \end{thm}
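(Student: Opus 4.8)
The plan is to reduce the theorem to the parity of the diagrams $A(n,m,l)$ and then to induct on $l$. First I would observe that inserting a $K_1$ is harmless for parity: Figure~\ref{fig: recursions}(a) identifies $\HHH(K_1\beta)$ with $\HHH(\beta)$ up to the invertible factor $1-q$ and an overall grading shift, neither of which changes which homological degrees are occupied, so $\beta$ is parity if and only if $K_1\beta$ is, and moreover $\HHH(\beta) = \frac{1}{1-q}\HHH(K_1\beta)$. Since $K_1\beta(d_1,d_2,d_3,d_4) = A(d_1,\,d_2-d_1,\,d_3-d_2)$ and the hypothesis $0\le d_1\le d_2\le d_3$ makes the three arguments $d_1$, $d_2-d_1$, $d_3-d_2$ nonnegative, it suffices to prove that $A(n,m,l)$ is parity for all $n,m,l\ge 0$.

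For that I would induct on $l$. The base case $l=0$ is exactly Corollary~\ref{cor: A(n,m,0) parity}. For the inductive step, recursion~\eqref{A recursion} gives
\[
A(n,m,l) = t^{-1}B(n,m) + qt^{-1}A(n,m,l-1),
\]
where $B(n,m)$ is parity by Corollary~\ref{cor: B(n,m) parity} and $A(n,m,l-1)$ is parity by the inductive hypothesis. Since recursion~\eqref{A recursion} is derived from Figure~\ref{fig: recursions}(d) alone, it is local---it involves no closing-up or conjugation---so the Remark following Figure~\ref{fig: recursions} applies: both summands being parity forces $A(n,m,l)$ to be parity. This completes the induction, and therefore $\beta$ is parity.

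The recursive computation of $\HHH(\beta)$ is then assembled by unwinding the three recursions in the order they were set up. Recursion~\eqref{A recursion} expresses $A(n,m,l)$ through $B(n,m)$ and $A(n,m,0)$; Lemma~\ref{lemma: A(n,m,0)} rewrites $A(n,m,0)$ through $B$-diagrams (including the twisted $\sigma_2^2B(n,m-1)$) and the Hogancamp base case $A(n,0,0)=(1-q)\FT_4^n$ of Lemma~\ref{lemma: A(n,0,0)}; recursion~\eqref{B recursion} together with Lemmas~\ref{lemma: B(n,0)} and~\ref{lemma: twisted B} reduce every twisted or untwisted $B$-diagram to $C$-diagrams and $B(0,0)$-diagrams; and recursion~\eqref{C recursion} with Lemmas~\ref{lemma: C(n)}, \ref{lemma: twisted C(n)} and \ref{lemma: JM C(n)} give closed rational expressions for all the $C$-diagrams in terms of $\HHH(K_4)$ and $\HHH(\FT_2^k)$, the last of which is a known rational function by Hogancamp~\cite{Hog1}. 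Each step outputs a rational function in $q,t,a$, so the process terminates with $\HHH(\beta)$ written explicitly.

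Since everything computationally delicate has already been handled in the lemmas above, I expect the only point needing care to be bookkeeping: at the assembly stage one must invoke only the local recursions~\eqref{A recursion}, \eqref{B recursion} and~\eqref{C recursion}, so that parity propagates back from the leaves of the recursion tree to $\beta$ itself, while remembering that the conjugation and closing-up moves were used only inside the individual lemmas, where their outputs were already checked to be parity.
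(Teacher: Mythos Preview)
Your proof is correct and follows exactly the paper's approach: reduce to $A(n,m,l)$ via the $K_1$-insertion identity, then induct on $l$ using recursion~\eqref{A recursion} with Corollaries~\ref{cor: B(n,m) parity} and~\ref{cor: A(n,m,0) parity} as the inputs. Your version is simply more explicit about the induction and the assembly of the computation than the paper's terse proof.
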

    \begin{proof}
        We write $\HHH(\beta)=\frac{1}{1-q}\HHH(A(d_1, d_2-d_1, d_3-d_2)),$ so it is sufficient to prove that $A(n,m,l)$ is parity for all $n,m,l \geq 0$. Apply recursion \eqref{A recursion} repeatedly, reducing to terms of the form $B(n,m)$ and $A(n,m,0)$. These are parity by Corollary \ref{cor: B(n,m) parity} and Corollary \ref{cor: A(n,m,0) parity} respectively, and we can continue following the recursions laid out above to compute $\HHH(\beta)$.
    \end{proof}

\end{document}